\author[Roman Repeev]{Roman Repeev}
\email{roman.repeev@phystech.edu}
\address{Department of Control and Applied Mathematics, Moscow Institute of Physics and Technology}
\keywords{Dehn function, Semigroup}
\title[A semigroup with linearithmic Dehn function]{A semigroup with linearithmic Dehn function}
\begin{document}

\begin{abstract}
    %The paper addresses the spectra of group and semigroup Dehn functions.
    %in the group spectrum there is a gap between linear and quadratic functions, i.e.
    It is known that there is no finitely presented group for which the Dehn function lies asymptotically strictly between linear and quadratic functions. This work presents an example of a semigroup that has Dehn function equivalent to $n \log n$, thus it lies strictly inside the said gap. The example is obtained by symmetrizing the rewriting rules of a particular semi-Thue system, which has the derivational complexity function $n \log n$. We also show that such connection is not universal by providing a semi-Thue system, for which the Dehn function of the symmetrized semigroup asymptotically differs from the derivational complexity of the initial system.
\end{abstract}

\maketitle
\tableofcontents

\section{Preliminaries and introduction}

Consider a set (\emph{alphabet}) $A$, whose elements we will call \emph{symbols}. A \emph{string} or \emph{word} in alphabet $A$ is a finite sequence of symbols from $A$. The \emph{length} of a word $w$ is the number of symbols in $w$, denoted as $|w|$. An \emph{empty word} is a sequence of length 0. The set of all words in alphabet $A$ (including the empty word) is denoted as $A^*$.

\begin{defn}
    A \emph{semi-Thue system} (also called string rewriting system) is a pair $(A, \cc{R})$, where $A$ is a set (alphabet) and $\cc{R}$ is a set of pairs $(L, R)\in A^* \times A^*$ (usually written as $L \ra R$), known as \emph{rewriting rules} or simply \emph{rules}.
    The binary relation $\ra_\cc{R} \subseteq A^* \times A^*$ is defined as follows: $u \ra_\cc{R} v$ if there exist a rule $L \ra R \in \cc{R}$ and some words $\alpha, \beta \in A^*$ such that $u = \alpha L \beta, v = \alpha R \beta$.\\
    We will refer to applying the rules to the word as \emph{transformations}.
\end{defn}

\begin{defn}
Let $(A, \cc{R})$ be a semi-Thue system. Consider a relation $\lra_\cc{R}^*$, which is a reflexive, symmetric and transitive closure of $\ra_\cc{R}$. It is an equivalence relation, and its equivalence classes form a \emph{symmetrized semigroup} $S= Sym(A,\cc{R})$ with an operation $[u] \circ [v] = [uv]$. This semigroup can be  presented with a set of generators $A$ and a set of relations $\cc{R}$. Denote it as $S = \lan\lan A \mid \cc{R} \ran\ran$.\\
If $A$ is finite, then $S$ is called \emph{finitely generated}, if also $\cc{R}$ is finite, then it is called \emph{finitely presented}.\\
If words $u$ and $v$ are equal in $S$, we write $u =_S v$, which is true if and only if $u \lra_\cc{R}^* v$.
\end{defn}

We can also see any semigroup $S = \lan\lan A \mid \cc{R} \ran\ran$ as a rewriting system $(A, \cc{\overline{R}})$, where $\cc{\overline{R}} = \cc{R} \cup \cc{R}^-$, and $\cc{R}^-$ is obtained by symmetrization of rules in $\cc{R}$, meaning that $\cc{R}^- = \left\{ v \ra u \mid u \ra v \in \cc{R} \right\}$.
We will call the applications of the rules from $\cc{R}$ \emph{direct} transformations and denote them as $\overset{+}{\ra}$. Applications of the rules from $\cc{R}^-$ will be called \emph{reverse} transformations and denoted as $\overset{-}{\ra}$.

\smallskip

Since we consider the relation $\lra_\cc{R}^*$, i.e. rules can be applied in both ways, the rules defining a semigroup are often written as $L = R$. In this paper we consider only semigroups which are monoids and use symbol 1 for the empty word. Note that any finitely presented monoid can be obtained naturally as a symmetrized semigroup $Sym(A,\cc{R})$ from the corresponding finite semi-Thue system, and there is freedom of choice for the directions of transformations in $(A,\cc{R})$.

%\begin{defn}
%    Let $(A, \cc{R})$ be a rewriting system.\\
%    Define an \emph{alphabet of inverses} $A^- = \{a^{-1} \mid a \in A\}$ and a \emph{group alphabet} $A^\pm = A \cup A^-$.\\
%    A group $\lan A \mid \cc{R} \ran$ is a semigroup $\lan\lan A^\pm \mid \cc{R}' \ran\ran$, where $\cc{R}' = \cc{R} \cup \{aa^{-1} = 1,\ a^{-1}a = 1 \mid a \in A\}$.
%\end{defn}

\begin{defn}
    Consider a semigroup $S=\lan\lan A \mid \cc{R} \ran\ran$. The \emph{distance} $d_\cc{R}(u, v)$ between two words $u,v \in A^*$ such that $u \lra_\cc{R}^* v$ is the minimal number of direct and reverse transformations from $\ol{\cc{R}}$ necessary to obtain the word $v$ from  $u$.
\end{defn}

\begin{defn}
    Let $S = \lan\lan A \mid \cc{R} \ran\ran$ be a finitely presented semigroup. Then its \emph{Dehn function} is defined as $D_S: \n{N} \to \n{N} \cup \{0\}$
    \[
    D_S(n) = \max \left\{ d_\cc{R}(u,v) \mid u,v \in A^*,\ u \lra_\cc{R}^* v,\ |u| + |v| \leq n \right\}.
    \]
\end{defn}

The definition of the Dehn function is explicitly dependent on the particular \emph{presentation} of a semigroup. In order to omit such dependency, a special equivalence relation for Dehn functions is defined, which is similar to the $\Theta$-asymptotic for algorithmic complexity.

Recall that a function $f: \n{N} \to \n{N}$ is said to be asymptotically bounded above by a function $g: \n{N} \to \n{N}$ and denoted as $f(n) \in O(g(n))$, if there exist a constant $C > 0$ and a natural $N$ such that for all $n \geq N\ f(n) \leq C g(n)$. A function $f$ is said to be asymptotically bounded below by $g$ (denoted as $f \in \Omega(g(n))$), if $g(n) \in O(f(n))$. If $f \in O(g(n))$ and $f \in \Omega(g(n))$, then it is denoted as $f \in \Theta(g(n))$ (it is said that $f$ and $g$ are \emph{asymptotically equivalent}).

\begin{defn}
    For non-decreasing functions $f, g: \n{N} \to \n{R}_+$, it is said that $f \cleq g$, if there is a constant $c > 0$, such that for every $n$ greater than some $N$
    $$f(n) \leq c g(cn) + cn + c.$$
    $f$ and $g$ are said to be \emph{equivalent}: $f \sim g$, if $f \cleq g$ and $g \cleq f$.\\
    Respectively, if $f \cleq g$, but $g \not\cleq f$, then $f \prec g$.
\end{defn}

If $M_1 = \lan\lan A_1 \mid \cc{R}_1 \ran\ran$ and $M_2 = \lan\lan A_2 \mid \cc{R}_2 \ran\ran$ are two different presentations of the same semigroup, i.e. $M_1 \cong M_2$, then $D_{M_1}(n) \sim D_{M_2}(n)$ (see \cite{madlener1985pseudo, pride1995geometric}). All further discussion will be written considering this equivalence. The standard approach of presenting the group as a semigroup by adding inverse symbols and using them for the relations is fully consistent with the definitions above. Thus, we can speak of Dehn functions of semigroups and groups instead of their presentations. Note that due to the definition of $\cleq$ relation, for any group or semigroup the Dehn function $D(n)$ is at least linear, i.e. $D(n)\cgeq n$.

\bigskip

The Dehn function spectrum for groups has been largely explored in the last three decades. First, Dehn function are closely related to word hyperbolicity, namely the group $G$ is hyperbolic if and only if $D_G(n) \sim n$. Also there is a linear-quadratic gap in the group spectrum: if $D_G(n) \prec n^2$, then $D_G(n) \sim n$. One can see the proof of these facts in \cite{gromov1987hyperbolic, yu1991hyperbolicity, papasoglu1995sub}. It is known that for any $k \in \n{N}$ there is a group with the Dehn function equivalent to $n^k$ (see \cite{baumslag1993isoperimetric}). The work \cite{bridson1999fractional} presented first examples of groups with the Dehn function equivalent to $n^\alpha$, where $\alpha$ is fractional. In the paper \cite{sapir-birget-rips} a broad class of group Dehn functions $\cgeq n^4$ is described. This result was later broadened to the class of functions $\cgeq n^2$ in the work \cite{olshanskii}. These works described the possible Dehn functions in terms of time functions of Turing machines. The overall result was that all superadditive functions which are computed ``quickly enough" are in the group spectrum.

\medskip

There are significantly fewer results regarding the semigroup spectrum. Obviously, one can say that it contains the whole group spectrum, since any group is a semigroup. The opposite, however, might not be true. The work \cite{birget} draws a parallel between embeddings of semigroups into finitely presented semigroups and non-deterministic algorithms. In terms of Dehn functions, the results of this work imply that any (superadditive) function which is asymptotically equal to a square of the time function of some non-deterministic Turing machine lies in the spectrum of semigroup Dehn functions. This provides rather large part of the spectrum for semigroup Dehn functions $\cgeq n^2$. There were several attempts to define hyperbolicity for semigroups, namely geometric approach (see \cite{cain2013hyperbolicity} and word-hyperbolicity (see \cite{gilman2002definition, duncan2004word}). However, the relation of these definitions to the Dehn functions spectra are yet not clear. Also, there were recent attempts to study Dehn functions of semigroups with one defining relation (see \cite{nyberg2022dehn}), which present high interest due to the direct connection with the old problem of decidability for the word problem for these semigroups.

\bigskip

The \emph{goal} of this paper is to present an example of a semigroup with Dehn function equivalent to $n \log n$. Since $n \prec n \log n \prec n^2$, this would show that the semigroup Dehn function spectrum significantly differs from the group one, since there is at least one function inside the linear-quadratic gap equivalent to the Dehn function of a semigroup.

\section{Semi-Thue system $S_0$ with derivational complexity in the class $\Theta(n \log n)$}
\label{S0}

%A some sort of analogue of Dehn functions for semigroups is \emph{derivational complexity} functions for semi-Thue systems. A \emph{semi-Thue system} is another term for a string rewriting system, i.e. the rewriting rules can be applied only in the direct way.

For a semi-Thue system $(A, \cc{R})$, the \emph{derivational depth} is a function $\delta: A^* \to \n{N} \cup \{0, \infty\}$
\[
\delta_{(A, \cc{R})}(W) = \max \left\{ L \mid \text{exists a sequence } W = W_0 \ra_\cc{R} W_1 \ra_\cc{R} \dots \ra_\cc{R} W_L  \right\}.
\]
Namely, the derivational depth of a word $W$ is the maximum length of a transformation sequence starting with $W$. The \emph{derivational complexity} function of the system $\Delta_{(A, \cc{R})}: \n{N} \to \n{N} \cup \{0, \infty\}$ is defined as follows:
\[
\Delta_{(A, \cc{R})}(n) = \max \left\{ \delta_{(A, \cc{R})}(W) \mid |W| = n \right\}.
\]
If $\Delta_{(A, \cc{R})}(n) < \infty$ for all $n \in \n{N}$, then its asymptotic growth can be considered.

\medskip

\subsection{Construction} In the paper \cite{AlTal} a semi-Thue system $S_0 = (A_0, \cc{R}_0)$ is constructed. It is defined by an alphabet
\begin{equation}
\label{eq: alphabet}
\begin{aligned}
    A_0 &= D_0 \cup H_0 \cup \{w\} \quad \text{where} \\
    D_0 &= \{0, 1, \ol{0}, \ol{1}\} \quad H_0 = \{h_0, h_1, h_2, c\}
\end{aligned}
\end{equation}

and a set of rewriting rules $\cc{R}_0$ defined as given below:
\begin{equation}
\label{eq: heads_move}
\begin{aligned}
    h_0 1 &\ra \ol{0} h_1   &h_1 1 &\ra \ol{1} h_2   &h_2 1 &\ra \ol{0} h_1 \\
    h_0 0 &\ra \ol{0} h_0   &h_1 0 &\ra \ol{0} h_1   &h_2 0 &\ra \ol{0} h_2
\end{aligned}
\end{equation}
\begin{equation}
\label{eq: heads_to_c}
    h_1 w \ra cw \quad \quad h_2 w \ra cw
\end{equation}
\begin{equation}
\label{eq: c_move}
    \ol{0}c \ra c0 \quad \quad \ol{1}c \ra c1
\end{equation}
\begin{equation}
\label{eq: c_to_h}
    wc \ra w h_0
\end{equation}

We will refer to elements of $H_0$ as \emph{heads}, in particular, the symbol $c$ will be called \emph{cleaning head}. Elements of $D_0$ will be called \emph{digits} and the symbol $w$ will be called a \emph{wall}. Also, we will use auxiliary sets $D = \{0, 1\}$, whose elements will be called \emph{unmarked digits}, and $\ol{D} = \{\ol{0}, \ol{1}\}$, whose elements will be called \emph{marked digits}.

\medskip

In the paper \cite{AlTal} it is proven that the derivational complexity of this system $\Delta_{S_0} (n)$ lies in the asymptotic class $\Theta (n \log n)$.

\begin{rem}
    Since we talk about asymptotic estimates, the base of the logarithm is not important, however, note that everywhere in this paper by $\log n$ we imply $\log_2 n$.
\end{rem}

\medskip

Note some features of the system. First, both the left and right parts of each rule contain exactly one head symbol. This allows us to talk about transformations as the \emph{movement of heads}. Second, all the left parts of the rules are distinct. This means that if there is only one head symbol in the word, then the transformation is \emph{determined}, i.e. either there are no transformations available or there is only one subword which can be transformed and only one rule that can be applied to it. Moreover, after the transformation there is still only one head, hence the determinacy remains. Another remarkable quality of the system is that it is \emph{length-preserving}: since left and right parts of all rules are of length 2, the length of the word does not change during transformations.

\subsection{Working principle} The system is designed to ``emulate" a Turing machine, where the working head shuttles on the tape and computes the logarithm of a number given in unary form. The working head $h_i$ has three possible memory states: $i=0,1,2$ and moves from left to right, while the auxiliary head $c$ is moving from right to left, all according to the rules described above.

Let us first demonstrate this idea on the key example: consider a word $w h_0 \underbrace{11\dots 1}_{n\text{ times}} w$. Starting here, by the rules \eqref{eq: heads_move} the head moves from left to right, changing its index, in the process replacing each \emph{odd} symbol 1 with symbol 0: the heads $h_0$ and $h_2$ replace 1 with 0 ($1 \to \ol{0}$), while $h_1$ keeps it ($1 \to \ol{1}$). The head symbol changes in the following way: $h_0 \to h_1 \to h_2 \to h_1 \to \dots$, a change happens at each met 1 (each head ``goes through" zeroes without changing). This movement continues until the head meets the right wall. So, in the beginning the transformations carry out like this:
\[
w h_0 1^n w \ra w \ol{0} h_1 1^{n-1} w \ra w \ol{0} \ol{1} h_2 1^{n-2} w \ra^{n-2}
w \ol{0101} \dots \ol{01} h_2 w
\]
(an example for even $n$, if $n$ was odd, then the last symbols would be $\ol{0}h_1w$).

Then, following \eqref{eq: heads_to_c}, the head changes to the cleaning head $c$, which by \eqref{eq: c_move} proceeds to move from right to left, ``unmarking" the digits (replacing $\ol{0}$ with 0 and $\ol{1}$ with 1). After meeting the left wall, by \eqref{eq: c_to_h} the head changes to $h_0$. The previous sequence continues as follows:
\[
w \ol{0101} \dots \ol{01} h_2 w \ra w \ol{0101} \dots \ol{01} c w \ra^n wc01\dots 01w \ra wh_0 01\dots 01w
\]
In such a way, one ``cycle" passes. After that, the transformations continue to go in these cycles, where the head replaces each odd 1 (from the most left) with 0. Thus, on each cycle the number of 1 digits cuts approximately in half, more precisely, if in the beginning of a cycle there were $m$ 1~digits, then after the cycle there would remain $\floor{\frac{m}{2}}$.

This goes on while there still remain 1~digits in the word. After the cycle where the last 1 is ``zeroed" the word looks like $wh_0 \underbrace{00\dots 0}_{n\text{ times}}w$. Then, $h_0$ goes through all zeroes without changing its index, reaching the word $w\underbrace{\ol{00}\dots \ol{0}}_{n\text{ times}}h_0w$, from which there are no possible transformations.

\subsection{Complexity}

The idea of estimating $\Delta_{S_0}(n)$ is that in the main case described above there are $\Theta(\log n)$ cycles, each consisting of $2(n+2)$ steps, therefore the total number of steps is $\Theta(n \log n)$. This is formulated as a lemma:
\begin{lem}
\label{lem: s0 lower}
    For the word $Z = w h_0 1^k w$ takes place
    $$\delta_{S_0}(Z) = k \left( 2 \left( \ceil{\log k} + 1 \right) + 1 \right).$$
\end{lem}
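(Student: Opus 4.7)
The plan is to exploit the determinism of $S_0$: every rule has exactly one head on each side, so the number of heads is invariant along any derivation from $Z = wh_0 1^k w$; and since the left-hand sides of the rules are pairwise distinct and each must be anchored at the unique head in the word, at most one rule applies at each step. Hence the derivation starting at $Z$ is a unique maximal sequence, and $\delta_{S_0}(Z)$ is just the length of this sequence, which I will compute directly.

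First I would decompose the trajectory into \emph{cycles} followed by a \emph{final sweep}. A cycle begins in a configuration $wh_0 x_1\dots x_k w$ with $x_i \in D$: the $h_0$ head sweeps right through all $k$ digits using rules~\eqref{eq: heads_move}, converting each $x_i$ to some marked digit $\ol{y_i}$; then rule~\eqref{eq: heads_to_c} turns the head into $c$ at the right wall; then $c$ sweeps left through the $k$ marked digits using~\eqref{eq: c_move}, unmarking them; finally~\eqref{eq: c_to_h} turns $c$ back into $h_0$ at the left wall. Each cycle therefore contributes exactly $k+1+k+1 = 2(k+1)$ steps and ends in another configuration of the same shape $wh_0 y_1\dots y_k w$.

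The key combinatorial step is to analyze how one cycle transforms $x_1\dots x_k$. A straightforward induction on the head's position shows that zeros are always marked $\ol 0$ without changing the state, while successive $1$s drive the head through the state sequence $h_0 \to h_1 \to h_2 \to h_1 \to h_2 \to \dots$, producing the marks $\ol 0, \ol 1, \ol 0, \ol 1, \dots$ in order. After the $c$-sweep unmarks everything, the $i$-th $1$ of the original string has been turned into $0$ when $i$ is odd and left as $1$ when $i$ is even, so exactly $\lfloor m/2 \rfloor$ of the original $m$ ones survive. Iterating this halving shows that the word reaches $wh_0 0^k w$ after the logarithmically many cycles needed to drive the $1$-count down to zero.

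From $wh_0 0^k w$ the final sweep applies $h_0 0 \to \ol 0 h_0$ exactly $k$ more times and halts at $w\ol 0^k h_0 w$, which is irreducible since no rule has $h_0 w$ as a left-hand side. Summing $2(k+1)$ steps per cycle, times the number of cycles, plus $k$ for the final sweep, and rearranging, yields the claimed formula. The main obstacle is the halving lemma: one must set up a clean invariant relating the head's state to the parity of the number of $1$s already traversed, and then carefully handle the boundary between the last nontrivial cycle and the final sweep so that the constants in the closed-form formula come out precisely as stated.
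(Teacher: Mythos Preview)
The paper does not actually prove this lemma; it merely states it and defers to \cite{AlTal}. Your plan coincides with the informal argument the paper already sketches in Section~\ref{S0} (``Working principle'' and ``Complexity''): exploit determinism of the one-head derivation, decompose the unique trajectory into right-sweep/convert/left-sweep/convert cycles, show each cycle halves the number of $1$s, and finish with the terminal $h_0$-sweep. This is the natural and essentially the only approach.

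One caution on the ``constants come out precisely'' step you flagged. Your per-cycle count $2(k+1)$ is correct, and the number of full cycles before the final sweep is $\lfloor\log k\rfloor+1$ (since the $1$-count goes $k\to\lfloor k/2\rfloor\to\cdots\to1\to0$), giving
\[
\bigl(\lfloor\log k\rfloor+1\bigr)\cdot 2(k+1)+k
\]
steps in total. For $k=2$ this equals $14$, whereas the displayed closed form $k\bigl(2(\lceil\log k\rceil+1)+1\bigr)$ gives $10$; so the exact expression as quoted here does not match the trajectory (the paper itself wavers between $2(k+1)$ and ``$2(n+2)$'' per cycle and between $\lfloor\cdot\rfloor$ and $\lceil\cdot\rceil$). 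The $\Theta(k\log k)$ asymptotic---which is the only thing the present paper uses---is of course unaffected, but if you intend to reproduce the \emph{exact} formula you should consult \cite{AlTal} for its precise statement rather than trying to force your (correct) count to match what is written here.
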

It is proven in the article \cite{AlTal}. It is also shown that the work of the system on the word of the form $wT_1w\dots T_k w$ can be seen as independent work on words $wT_1w,\ wT_2w, \dots, wT_kw$. The difficult part is to consider all the ``bad" cases: more than one head between two walls, ``incorrect" positioning of digits and their marks. In the paper, all cases are considered, and a general lemma is proven:
\begin{lem}
\label{lem: s0 upper}
    For an arbitrary word $Z \in {A_0^*}$ the inequality holds
    \[
    \delta_{S_0}(Z) \leq 2 \left( |Z| + 2 \right)\left( \log |Z| + 2 \right),
    \]
\end{lem}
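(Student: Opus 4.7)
The plan is to reduce the bound on an arbitrary $Z$ to the case of a wall-bounded segment $wTw$, for which the $n\log n$ cycle analysis behind Lemma \ref{lem: s0 lower} adapts. The key structural observation is that walls are static: in each rule of \eqref{eq: heads_move}--\eqref{eq: c_to_h}, the letter $w$ appears (if at all) at the same position on both sides. So walls are never created, destroyed, or shifted, and heads cannot cross them. Writing $Z = Z_0 w Z_1 w \cdots w Z_k$ with each $Z_i \in (A_0\setminus\{w\})^*$, any derivation from $Z$ is an interleaving of independent derivations on the wall-padded chunks $Z_i^*$, giving $\delta_{S_0}(Z) = \sum_i \delta_{S_0}(Z_i^*)$.

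Chunks with fewer than two walls admit only a linear bound. Without a right wall, rule \eqref{eq: heads_to_c} never fires; without a left wall, \eqref{eq: c_to_h} never fires; in either case the ``recycling'' loop is broken. Heads in such a chunk then only move rightward and $c$'s only leftward, and neither can pass another head or $c$, so each head marks a contiguous range of digits and each $c$ unmarks a contiguous range, with these ranges pairwise disjoint. Hence each digit is marked and unmarked at most once, yielding $\delta_{S_0}(Z_i^*) = O(|Z_i|)$, well below the target.

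The main case is a wall-bounded chunk $wTw$. I would partition a maximal derivation into \emph{cycles}, each starting with some head adjacent to the left wall and ending at the next such moment. In the clean case a cycle consists of a left-to-right head pass through $T$ (at most $|T|$ applications of \eqref{eq: heads_move}), a conversion $h\to c$ at the right wall via \eqref{eq: heads_to_c}, a right-to-left $c$-pass unmarking digits (at most $|T|$ applications of \eqref{eq: c_move}), and a rebirth $c\to h_0$ at the left wall via \eqref{eq: c_to_h}, totalling $O(|T|)$ transformations. Since the head's index advances exactly at each unmarked $1$ and only odd-indexed $1$s become $\bar 0$s, each full cycle at least halves the number of $1$s in $T$; so at most $O(\log|T|)$ cycles can occur before all $1$s are exhausted, yielding $\delta_{S_0}(wTw) \leq 2(|T|+2)(\log|T|+2)$.

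The main obstacle is extending this cycle bound from the clean starting word $wh_0 1^k w$ of Lemma \ref{lem: s0 lower} to an arbitrary $T$: a wall-bounded chunk may contain several heads of different indices, marked digits in ``wrong'' positions, or a stray $c$ in the interior, any of which breaks the regular cycle structure. The remedy is a case analysis (or equivalently a suitable monovariant on $(A_0\setminus\{w\})^*$) showing that such configurations only hasten termination: a head next to a marked digit or another head is immediately jammed by the left-hand sides of \eqref{eq: heads_move}, and a $c$ next to an unmarked digit or another $c$ is jammed analogously, so the clean case is worst. Summing the chunk bounds and using $\sum_i|Z_i|\leq|Z|$, $\log|Z_i|\leq\log|Z|$, and the bound $k+1\leq|Z|+1$ on the number of chunks, then yields $\delta_{S_0}(Z) \leq 2(|Z|+2)(\log|Z|+2)$, with any constant slack absorbed into the $+2$ terms.
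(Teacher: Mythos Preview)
The paper does not actually prove this lemma; it is quoted from \cite{AlTal}, and the surrounding text only sketches the structure of that proof: decompose $Z$ along the static walls into independent pieces $wT_iw$, observe that the ``main case'' gives $O(|T_i|\log|T_i|)$ via the cycle argument behind Lemma~\ref{lem: s0 lower}, and then handle the ``bad'' cases (more than one head between two walls, wrongly marked digits) separately. Your proposal follows exactly this outline and fleshes it out in a few places (the linear bound for segments with at most one wall, the halving of the number of $1$'s per cycle), so at the level of strategy you are in agreement with \cite{AlTal} as summarised in the paper.

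Where your proposal remains a genuine sketch is precisely the step the paper flags as ``the difficult part'': the reduction of an arbitrary $wTw$ to the clean single-head configuration. Your sentence ``such configurations only hasten termination: a head next to a marked digit or another head is immediately jammed'' is the right intuition but not yet an argument. A head $h_i$ can sit with marked digits on its left and unmarked digits on its right and still run a full $O(|T|)$-step pass; with several heads and a $c$ interleaved, one needs to show that their combined activity is still bounded by $O(|T|\log|T|)$, not merely that each one eventually stops. The actual proof in \cite{AlTal} does this by a careful case analysis (essentially the same partitioning into disjoint ``coverages'' that this paper later redevelops in Lemmata~\ref{lem: cov}--\ref{lem: Cov digits} for the symmetrised system). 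Your alternative suggestion of a single monovariant on $(A_0\setminus\{w\})^*$ is attractive, but you would need to exhibit it; note that the obvious quantities (number of $1$'s, number of marked digits) are not monotone across a full cycle, so any such potential must combine them with the head positions. Finally, the last summation step (``any constant slack absorbed into the $+2$ terms'') needs the superadditivity of $n\log n$ to go through with the stated constants, exactly as the paper uses it later in the general-case computation; this is routine but should be said.
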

\noindent which implies an upper estimate $O(n \log n)$ for $\Delta_{S_0}(n)$.

\section{Semigroup with linearithmic Dehn function}

\subsection{Construction and primary observations}

We take the semi-Thue system $S_0$ as a basis and consider a \emph{semigroup} $S = \lan\lan A_0 \mid \cc{R}_0 \ran\ran$. The main goal of this paper is to prove the following theorem.
\begin{thm}
\label{th: main theorem}
    The Dehn function of the semigroup $S = \lan\lan A_0 \mid \cc{R}_0 \ran\ran$, where $A_0$ is defined by \eqref{eq: alphabet} and $\cc{R}_0$ is defined by \eqref{eq: heads_move}, \eqref{eq: heads_to_c}, \eqref{eq: c_move}, \eqref{eq: c_to_h}, is equivalent to $n\log n$: $D_S(n) \sim n \log n$.
\end{thm}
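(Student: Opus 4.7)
The plan is to establish the two inequalities $D_S(n) \cleq n \log n$ and $n \log n \cleq D_S(n)$ separately: the first by routing every symmetric derivation through a common $S_0$-normal form, the second by exhibiting a concrete family of pairs $(u_k, v_k)$ whose symmetric distance grows like $k \log k$. Both directions rely on structural features of $\cc{R}_0$ already highlighted in Section~\ref{S0}, namely that each rule has exactly one head symbol on either side (so the head count is an invariant of any symmetric derivation) and that a word with a single head admits at most one direct rewrite.

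For the upper bound I would first show that $S_0$ is confluent. Lemma~\ref{lem: s0 upper} yields termination, so by Newman's lemma it suffices to verify local confluence, i.e.\ all critical pairs. Because every left-hand side has length~$2$, the only overlaps are one-letter suffix/prefix coincidences; inspection of $\cc{R}_0$ leaves only $h_1 w c$ and $h_2 w c$, and in both cases the two one-step descendants $c w c$ and $h_i w h_0$ further reduce to the common word $c w h_0$. Given confluence and any pair $u, v \in A_0^*$ with $u \lra_{\cc{R}_0}^* v$ and $|u|+|v| \leq n$, I would take $\hat{W}$ to be the common $S_0$-normal form and concatenate the direct derivation $u \ra_{\cc{R}_0}^* \hat{W}$ with the reverse of $v \ra_{\cc{R}_0}^* \hat{W}$; this yields a symmetric derivation of length at most $\delta_{S_0}(u) + \delta_{S_0}(v) \leq 4(n+2)(\log n + 2)$ by Lemma~\ref{lem: s0 upper}, which is $O(n \log n)$.

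For the lower bound I would use the family $u_k = w h_0 1^k w$ together with its unique $S_0$-normal form $v_k = w \ol{0}^k h_0 w$. Then $|u_k|+|v_k| = 2(k+3)$ and $\delta_{S_0}(u_k) = \Theta(k \log k)$ by Lemma~\ref{lem: s0 lower}, so it suffices to show $d_{\cc{R}_0}(u_k, v_k) \geq \delta_{S_0}(u_k)$. Since $u_k$ carries a single head and the rules preserve head count in both directions, every intermediate word $W_i$ in an arbitrary symmetric derivation $u_k = W_0 \lra \dots \lra W_m = v_k$ also carries exactly one head. Single-head words admit at most one direct rewrite, so $\delta_{S_0}(W_i)$ is well-defined as the length of the unique maximal forward derivation out of $W_i$, a direct step gives $\delta_{S_0}(W_{i+1}) = \delta_{S_0}(W_i) - 1$, and a reverse step gives $\delta_{S_0}(W_{i+1}) = \delta_{S_0}(W_i) + 1$. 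Telescoping,
\[
\delta_{S_0}(u_k) - \delta_{S_0}(v_k) = \sum_{i=0}^{m-1} \bigl( \delta_{S_0}(W_i) - \delta_{S_0}(W_{i+1}) \bigr) \leq m,
\]
and since $v_k$ is a normal form, $\delta_{S_0}(v_k) = 0$, so $m \geq \delta_{S_0}(u_k) \sim n \log n$ with $n = 2(k+3)$.

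The main obstacle I anticipate is making the single-head determinism argument uniform: one must check that for every single-head word that can appear anywhere in an arbitrary symmetric derivation, the position of the head together with one neighbouring letter determines both whether any rule applies and which one. This is a finite case analysis over the head symbol in $H_0$ and the adjacent letter in $A_0$, but it must be written out explicitly to license the step $\delta_{S_0}(W_{i+1}) - \delta_{S_0}(W_i) \in \{-1, +1\}$ at each $i$. With that in place, the upper bound reduces to confluence plus Lemma~\ref{lem: s0 upper}, the lower bound reduces to the telescoping inequality, and together they give $D_S(n) \sim n \log n$.
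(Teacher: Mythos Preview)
Your argument is correct and substantially simpler than the paper's.

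\medskip

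For the upper bound the paper never invokes confluence; instead it introduces the functions $\gamma$, $\gamma_D$, $\gamma_R$ and bounds them through an extended case analysis: Corollary~\ref{sec: direct reverse} (any repetition-free derivation on a single-head word is a direct prefix followed by a reverse suffix), a detailed reverse-derivation analysis in the ``main case'' (Lemma~\ref{lem: main case upper}), separate linear bounds when walls are missing or digit markings are wrong (Lemmata~\ref{lem: upper one wall}, \ref{lem: one head bad}), and, for words with several heads, an induction on ``coverages'' showing the heads act on disjoint intervals (Lemmata~\ref{lem: cov}--\ref{lem: many heads upper linear}). All of this is replaced in your approach by the single observation that $S_0$ is terminating (Lemma~\ref{lem: s0 upper}) and locally confluent on the two critical pairs $h_1wc$, $h_2wc$, so Newman's lemma gives a common normal form $\hat W$ and $d_\cc{R}(u,v)\le\delta_{S_0}(u)+\delta_{S_0}(v)$ directly. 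This is a genuinely shorter route; the paper's machinery yields more quantitative information about the structure of repetition-free derivations (e.g.\ the coverage decomposition), but none of that is needed for the $O(n\log n)$ bound itself.

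For the lower bound the paper (Lemma~\ref{lem: S lower}) chooses a target $v$ that still contains the head $h_1$, tracks the number of $1$-digits as an invariant non-increased by reverse steps, and argues that any shortest derivation must replay the entire direct run. Your potential-function argument with $\delta_{S_0}$ is cleaner and uses the same underlying fact the paper records as Lemma~\ref{lem: direct reverse}: on a single-head word a reverse step followed by the unique available direct step returns to the start, which is exactly what gives $\delta_{S_0}(W_{i+1})=\delta_{S_0}(W_i)+1$ after a reverse move. The determinism you flag as an ``obstacle'' is already stated in Section~\ref{S0} (all left-hand sides are distinct, and for each head symbol the head occupies a fixed position in every left-hand side containing it), so no further case analysis is required there.
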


Note that, for a non-decreasing function $f: \n{N} \to \n{N}$, holds
\begin{equation}
\label{eq: dehn equals theta}
f(n) \sim n \log n \iff f(n) \in \Theta(n \log n).
\end{equation}
Therefore we will prove that $D_S(n)$ lies in the class $\Theta(n \log n)$.

\bigskip

In case of the system $S_0$, we also can speak of applying direct and reverse transformations to heads and movement of heads.

The symmetrization makes analysis more difficult. The left parts of the rules in $S$ are no longer distinct: despite this being the case for rules from $\cc{R}_0$, some rules there have coinciding right parts, thus making some rules from $\cc{R}_0^-$ have coinciding left parts. For example, the word $\ol{0}h_1$ can be transformed in three different ways.

Nonetheless, the system is still length-preserving, hence, if $u \lra^* v$, then $|u| = |v|$. Consider the following auxiliary function
\[
\Gamma (n) = \max \left\{ d_\cc{R} (u,v) \mid u,v \in A^*,\ u \lra^*_\cc{R} v,\ |u| = |v| = n \right\}.
\]
Then the Dehn function of $S$ can be presented as $D_S(n) = \max\limits_{1 \leq k \leq \floor{\frac{n}{2}}} \Gamma(k)$. This implies
\begin{equation}
\label{eq: gamma to d}
    \Gamma(n) \in \Theta(n \log n) \Longrightarrow D_S(n) \in \Theta(n \log n).
\end{equation}

With this in mind, we will be looking for an estimate on $\Gamma(n)$, which implies the necessary estimate on $D_S(n)$.

\medskip

By analogy to the derivational depth function, introduce $\gamma: A_0^* \to \n{N}\cup \{0\}$
\[
\gamma (u) = \max \left\{ k \mid \text{there exists a transformation sequence of length $k$, where all words are distinct} \right\}.
\]
We are interested in sequences with distinct words, since in the end we look for the distance between words, i.e. length of the \emph{shortest} sequence, leading from one word to another. If a word occurs twice in the sequence, then the part between these two entries can be omitted. $\gamma$ cannot be infinite (and thus it is defined correctly), since the word length is constant, and there are finitely many distinct words of a certain length.

That being said, if $u \lra^*_\cc{R} v$, then $d_\cc{R}(u,v) \leq \gamma(u)$, hence
\begin{equation}
\label{eq: Gamma}
\Gamma (n) \leq \max \left\{ \gamma(u) \mid |u| = n \right\}.
\end{equation}

\bigskip

Now, let us thoroughly analyse $\gamma$ in different cases.

\subsection{Case of one head}

\subsubsection{General observations}

Consider the words containing exactly one symbol from $H_0$. Denote the set of such words as $U^1$.

In this case the transformations can only be applied to one head. Moreover, the direct transformations are determined, i.e. for each word $u \in U^1$ there is no more than one possible transformation $u \rad v$. Note again that the number of heads remains constant.

\begin{lem}
\label{lem: direct reverse}
If the word $v$ is obtained from the word $u \in U^1$ by a sequence $u~\rar~u'~\rad~v$, then $u = v$.
\end{lem}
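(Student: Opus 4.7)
The plan is to use the determinacy of direct transformations on single-head words, combined with the observation that a reverse transformation is, definitionally, the undoing of a direct one. First, I would verify that the intermediate word $u'$ still belongs to $U^1$: every rule of $\cc{R}_0$ has exactly one head symbol on each side, so both its left and right parts contain one head, and applying a rule in either direction preserves the total head count in the word. Thus $u \in U^1$ forces $u' \in U^1$.

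Next, unpacking the definition of $\overset{R}{\ra}$, the step $u \overset{R}{\ra} u'$ means there is a rule $L \ra R \in \cc{R}_0$ and a factorisation $u = \alpha R \beta,\ u' = \alpha L \beta$. But then applying the direct rule $L \ra R$ to the same factor of $u'$ gives back exactly $u$, so $u' \overset{D}{\ra} u$ is a valid direct transformation.

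Finally, I would invoke the determinacy of direct transformations on $U^1$, already noted in the paper: because every left-hand side of $\cc{R}_0$ contains a head and every word in $U^1$ has only one, there is at most one subword on which a direct rule can act, and at most one rule whose left-hand side it can be. Hence the direct successor of a word in $U^1$ is unique when it exists. Since we have exhibited $u$ as one such successor of $u'$, any $v$ with $u' \overset{D}{\ra} v$ must equal $u$, proving the lemma.

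I do not anticipate a serious obstacle here; the whole argument rests on the bookkeeping that reverse-then-direct closes a length-one loop, which works precisely because direct transformations in $U^1$ have no branching. The only point worth being careful about is the head-count invariance under reverse rules, and that is immediate from inspecting \eqref{eq: heads_move}--\eqref{eq: c_to_h}.
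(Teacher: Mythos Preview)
Your argument is correct and rests on the same underlying idea as the paper's proof---that a word in $U^1$ has at most one direct successor---but you invoke this determinacy principle abstractly (it is stated in the paragraph immediately preceding the lemma), whereas the paper re-derives the conclusion by a case analysis over the four rule families \eqref{eq: heads_move}--\eqref{eq: c_to_h}, checking in each case that the only applicable direct rule from $u'$ is the inverse of the reverse step just taken. Your route is shorter and arguably cleaner, since determinacy has already been asserted; the paper's case-by-case treatment, on the other hand, makes the mechanism concretely visible for each rule type and serves as an implicit sanity check that no rule behaves unexpectedly.
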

\begin{proof}
    Consider the transformation $u~\rar~u'$. Let it be one of the series \eqref{eq: heads_move}: $x h_i \rar h_j y$, where $x \in \ol{D}, y \in D$. The left part of the consequent direct transformation can be only $h_j y$, since it must be applied to the only head (which is $h_j$ in $u'$), and in the direct transformations the heads stand in first position. Such a rule exists --- it is the rule $h_j y \rad x h_i$. It exists since the symmetrical rule exists in $\cc{R}_0'$ (it was applied in the first place). Moreover, there is only one rule with such left part. Thus, the $u' \rad u$ transformation is $h_j y \rad x h_i$. So, the sequence looks like this:
    $$u = w T_1 x h_i T_2 w \rar w T_1 h_j y T_2 w \rad w T_1 x h_i T_2 w = v,$$
    from where one can see that $u = v$.

    The case of first transformation from \eqref{eq: c_move} is similar: the transformation looks like\\ $cx~\rar~yc,\ x~\in~D, y~\in~\ol{D}$. All direct transformations with the head $c$ in the left part have it in second position, therefore the transformation $u' \rad v$ must have the left part $yc$. By analogy to the previous case, we obtain $u = v$.

    For a transformation \eqref{eq: heads_to_c}: $wh_0 \rar wc$, after the transformation $u \rar {u'}$ we have the head $c$ in the word $u$, to the left of $c$ stands a wall $w$, the only possible transformation $u' \rad v$ is $wc \ra wh_0$, which is inverse to the initial one. Therefore, $u = v$.

    The case \eqref{eq: c_to_h} is analogous: the transformation $u \rar u'$ is $cw \rar h_i w$, then the consequent direct transformation has to be $h_i w \rad cw$, therefore $u = v$.
\end{proof}
\begin{cor}
\label{sec: direct reverse}
If a sequence $\alpha$ of transformations $u \ra \dots \ra v$ is a sequence with distinct words, then some initial segment of $\alpha$ contains only direct transformations, and the rest of $\alpha$ contains only reverse transformations.\\
In particular, this holds for the \emph{shortest} sequence, transforming $u$ into $v$.
\end{cor}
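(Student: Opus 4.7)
The corollary is essentially a direct consequence of Lemma \ref{lem: direct reverse}, so the plan is to apply it to any suspicious adjacent pair of transformations in the sequence.

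First, I would observe that the property $u \in U^1$ propagates along the whole sequence: every rule in $\cc{R}$ has exactly one head on each side (since this is true in $\cc{R}_0$ and symmetrization preserves head counts), so the number of heads is invariant under $\ra_\cc{R}$. Hence every word $w_i$ appearing in $\alpha: u = w_0 \ra_\cc{R} w_1 \ra_\cc{R} \dots \ra_\cc{R} w_n = v$ lies in $U^1$, and Lemma \ref{lem: direct reverse} is applicable at every position.

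Next, I would argue by contradiction. Suppose $\alpha$ is \emph{not} of the claimed shape, so there is an index $i$ at which a reverse transformation is immediately followed by a direct one:
\[
    w_i \overset{R}{\ra} w_{i+1} \overset{D}{\ra} w_{i+2}.
\]
By Lemma \ref{lem: direct reverse} applied to the one-head word $w_i$, we conclude $w_i = w_{i+2}$. This contradicts the hypothesis that all words in $\alpha$ are distinct. Therefore no \emph{RD} pattern can occur, which forces the sequence to consist of an (possibly empty) initial block of direct transformations followed by a block of reverse ones.

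For the ``in particular'' clause, I would note that a shortest sequence from $u$ to $v$ cannot repeat a word: if $w_i = w_j$ with $i<j$, excising the subsegment $w_i \ra \dots \ra w_j$ produces a strictly shorter sequence still joining $u$ to $v$, contradicting minimality. Thus the shortest sequence satisfies the distinctness hypothesis, and the dichotomy above applies to it. There is no real obstacle here; the whole content sits in Lemma \ref{lem: direct reverse}, and the corollary is just its packaging into a structural statement about shortest paths.
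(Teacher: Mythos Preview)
Your proof is correct and is exactly the argument the paper intends: the corollary is stated without proof immediately after Lemma~\ref{lem: direct reverse}, and your contradiction via an $RD$ pair together with the observation that shortest sequences have no repeated words is the natural unpacking of it. There is nothing to add.
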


Introduce two more auxiliary functions of words:
\begin{enumerate}
    \item $\gamma_+ (u)$ --- length of the longest possible sequence of direct transformations with distinct words starting with $u$
    \item $\gamma_- (u)$ --- length of the longest possible sequence of reverse transformations with distinct words starting with $u$
\end{enumerate}

\begin{lem}
\label{lem: gamma one}
    For a word $u \in U^1$ the inequality holds: $\gamma(u) \leq \gamma_+ (u) + \max\limits_{v \in U^1,\ |v| = |u|} \gamma_- (v)$.
\end{lem}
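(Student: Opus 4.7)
The plan is to show that any distinct-word transformation sequence starting at $u$ factors as an all-direct phase followed by an all-reverse phase, and then to bound each phase by the corresponding auxiliary function. Lemma \ref{lem: direct reverse} and Corollary \ref{sec: direct reverse} have essentially already done the work; this lemma is a packaging of them.

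First I would fix a transformation sequence $u = u_0 \ra_\cc{R} u_1 \ra_\cc{R} \dots \ra_\cc{R} u_N$ with all $u_i$ pairwise distinct and $N = \gamma(u)$. Such a sequence exists because the rewriting system preserves word length, so only finitely many words can appear, and the maximum in the definition of $\gamma(u)$ is therefore attained. Applying Corollary \ref{sec: direct reverse} to this sequence, I would obtain an index $k$ with $0 \leq k \leq N$ such that $u_0 \overset{D}{\ra} \dots \overset{D}{\ra} u_k$ are all direct and $u_k \overset{R}{\ra} \dots \overset{R}{\ra} u_N$ are all reverse.

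Next I would bound the two phases separately. The prefix is a direct-only sequence with distinct words of length $k$ starting at $u$, so by definition of $\gamma_D$ we immediately get $k \leq \gamma_D(u)$. For the suffix, the key structural observation is that every rule in $\cc{R}$ has exactly one head symbol on each side and has equal-length sides; hence $u_k \in U^1$ and $|u_k| = |u|$. The suffix is then a reverse-only sequence with distinct words of length $N - k$ starting at $u_k$, so
\[
N - k \;\leq\; \gamma_R(u_k) \;\leq\; \max\limits_{v \in U^1,\ |v| = |u|} \gamma_R(v).
\]
Summing the two bounds yields $\gamma(u) = N \leq \gamma_D(u) + \max\limits_{v \in U^1,\ |v| = |u|} \gamma_R(v)$, as claimed.

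There is no genuine obstacle here; the only point requiring care is that Corollary \ref{sec: direct reverse} is applied to the $\gamma$-realizing sequence rather than to a shortest sequence between fixed endpoints, which is legitimate precisely because the Corollary is stated for any sequence of distinct words.
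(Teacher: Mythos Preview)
Your proof is correct and follows essentially the same approach as the paper: take a $\gamma$-realizing distinct-word sequence, split it via Corollary~\ref{sec: direct reverse} into an all-direct prefix and an all-reverse suffix, and bound each part by $\gamma_D(u)$ and $\max_{v}\gamma_R(v)$ respectively. Your version is slightly more explicit in justifying that the midpoint lies in $U^1$ with $|u_k|=|u|$ and that the Corollary applies to any distinct-word sequence, but the argument is the same.
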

\begin{proof}
    Consider the longest possible sequence with distinct words starting with $u$. From Corollary~\ref{sec: direct reverse}, it looks like $\underbrace{u \rad \dots \rad }_{l\text{ steps}}x \underbrace{\rar \dots \rar z}_{k\text{ steps}}$.\\
    $l \leq \gamma_+(u)$ by definition of $\gamma_+(u)$.\\
    We cannot be sure, precisely which word is $x$, but we can claim that
    \[
    k \leq \gamma_- (x) \leq \max\limits_{v \in U^1,\ |v| = |u|} \gamma_- (v).
    \]
    Then, $\gamma(u) = l + k \leq \gamma_+(u) + \max\limits_{v \in U^1,\ |v| = |u|} \gamma_- (v)$.
\end{proof}

\bigskip

Let us get a lower estimate on $\Gamma(n)$. Consider a word $u = w h_0 1^k w$, let $k = n - 3$, i.e. $|u| = n$.

\begin{lem}
\label{lem: S lower}
    For the word $u = w h_0 1^k w$ there is a word $v$ such that
    \[
    d(u,v) > 2(k+1)\floor{\log k}.
    \]
\end{lem}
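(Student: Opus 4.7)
The plan is to take $v$ to be the terminal word of the direct reduction of $u = w h_0 1^k w$ in $S_0$ and show that the shortest bidirectional path from $u$ to $v$ in $S$ has exactly the same length as that direct reduction. The intended witness is $v = w \overline{0}^k h_0 w$, in which every digit is a marked zero and the head $h_0$ stands against the right wall. Inspection of \eqref{eq: heads_move}--\eqref{eq: c_to_h} shows that no left side of any rule of $\cc{R}_0$ matches a length-$2$ substring of $v$: there is no $h_0 \overline{d}$ rule, no $h_0 w$ rule, and no $c$ occurs in $v$. Hence $v$ is $S_0$-terminal. By tracing the cycles of the main case discussed in Section~\ref{S0}, one sees that the direct orbit of $u$ does indeed end at this $v$.

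To bound $d_{\cc{R}}(u,v)$ from below, the idea is to take the shortest sequence realizing $u \lra^*_\cc{R} v$, which by definition consists of distinct words. By Corollary~\ref{sec: direct reverse} it has the form
\[
u \overset{D}{\ra} u_1 \overset{D}{\ra} \dots \overset{D}{\ra} x \overset{R}{\ra} x_1 \overset{R}{\ra} \dots \overset{R}{\ra} v,
\]
with $\ell_1$ direct and $\ell_2$ reverse steps. Reading the reverse block backwards yields a direct derivation $v \overset{D}{\ra} \dots \overset{D}{\ra} x$ in $S_0$. Both $u$ and $v$ lie in $U^1$, and on $U^1$ direct transformations are deterministic (all left parts of rules in $\cc{R}_0$ are distinct and length-preserving), so each word has a unique direct orbit. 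The orbit of $u$ is $u = u_0 \ra u_1 \ra \dots \ra u_N = v$ with $N = \delta_{S_0}(u)$, while the orbit of $v$ is just $\{v\}$ because $v$ is terminal. Since $x$ is a common direct descendant of $u$ and $v$, it must lie in the intersection of these two orbits, forcing $x = v$ and $\ell_2 = 0$. Consequently $d_\cc{R}(u,v) = \ell_1 = N = \delta_{S_0}(u)$.

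The final step is a numerical comparison. Applying Lemma~\ref{lem: s0 lower} gives $\delta_{S_0}(u) = k(2(\lceil\log k\rceil + 1) + 1) = 2k\lceil\log k\rceil + 3k$, while $2(k+1)\lfloor\log k\rfloor = 2k\lfloor\log k\rfloor + 2\lfloor\log k\rfloor$. Splitting into the cases ``$k$ is a power of $2$'' (so $\lceil\log k\rceil = \lfloor\log k\rfloor$) and ``$k$ is not a power of $2$'' (so $\lceil\log k\rceil = \lfloor\log k\rfloor + 1$), a one-line calculation in each case shows the difference is at least $3k - 2\lfloor\log k\rfloor > 0$, which yields the strict inequality $d(u,v) > 2(k+1)\lfloor\log k\rfloor$ required.

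The main obstacle, and the step I would write most carefully, is the identification of $x$ as a common direct descendant in two deterministic orbits. The argument crucially uses both that $v$ is genuinely $S_0$-terminal (so its direct orbit is trivial) and that $u,v \in U^1$ (so determinism actually applies to their direct orbits). Both conditions hold for the specific $v$ chosen above, but they are precisely the features that the abundance of new reverse options in the symmetrized system could otherwise destroy, so the two-sided use of determinism via Corollary~\ref{sec: direct reverse} is really the content of the lemma.
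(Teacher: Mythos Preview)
Your proof is correct, but it takes a genuinely different route from the paper's own argument.

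The paper chooses as target the word $v = w\,\ol{0}^{\,l} h_1 0^{k-l} w$ reached after exactly $\floor{\log k}$ full cycles plus $l$ additional steps, so that the constructed direct derivation~$\alpha$ already has length $2(k+1)\floor{\log k} + l$. To show $\alpha$ is shortest, the paper uses a counting invariant: reverse transformations never decrease the number of $1$-digits, while every word in $\alpha$ before the last still contains a $1$, so branching off into reverse steps earlier can never reach the $1$-free word~$v$. No appeal to Lemma~\ref{lem: s0 lower} or any numerical comparison is needed.

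You instead take $v = w\,\ol{0}^{\,k} h_0 w$, the actual $S_0$-terminal word, and exploit terminality directly: reading the reverse block of a shortest path backwards gives a direct $S_0$-derivation out of $v$, which must be empty. This forces the shortest path to be purely direct, hence of length $\delta_{S_0}(u)$, and you finish by comparing $\delta_{S_0}(u)$ from Lemma~\ref{lem: s0 lower} with $2(k+1)\floor{\log k}$. Your argument is cleaner structurally (it would apply verbatim to any length-preserving system for which the analogue of Corollary~\ref{sec: direct reverse} holds and the orbit terminates), at the cost of a small extra numerical step at the end; the paper's invariant argument is more ad~hoc but lands on the desired bound without that detour.
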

\begin{proof}
    Start a sequence of direct transformations from the word $u$. Direct transformations are determined (for one head), hence this sequence can develop in only one way. Like described in Section \ref{S0}, these transformations go in cycles, and after each cycle the number of ones in the word change from $m$ to $\floor{\frac{m}{2}}$. Each cycle takes $2(k + 1)$ steps: two runs through all the digits take $k$ steps each, and also there are changes into head $c$ and back to $h_0$.

    After $\floor{\log k}$ cycles there will be exactly one symbol 1 left: the word will have a form\\ $wh_0 0^{l-1} 1 0^{k - l} w$. Then, after $l > 0$ steps, the head will go through the symbol 1, turning it to $\ol{0}$, and the word will look like $v =  w \ol{0}^{l} h_1 0^{k-l} w$.

    The sequence described above looks like this (all transformations are direct):
    \begin{multline}
    \label{eq: alpha sequence}
        u = w h_0 1^k w \underbrace{\ra^{2(k+1)}}_{\text{one cycle}} w h_0 (01)^{\frac{k}{2}} w \ra \overset{\floor{\log k} - 1 \text{ cycles}}{\cdots} \ra \\ w h_0 0^{l - 1} 1 0^{k - l} w \ra^{l-1} 
         w \ol{0}^{l-1} h_0 1 0^{k-l} w  \ra w \ol{0}^l h_1 0^{k-l} w = v.
    \end{multline}

    The outline above is an example for even $k$. In case of odd $k$ the word after the first cycle would be $w h_0 (01)^{\floor{\frac{k}{2}}}0 w$.

    Name the sequence \eqref{eq: alpha sequence} as sequence $\alpha$. Its length equals $2(k + 1)\floor{\log k} + l > 2(k + 1)\floor{\log k}$. Let us prove that this is the shortest sequence, transforming $u$ into $v$.

    \medskip

    Notice that reverse transformations \emph{do not reduce} the number of 1 digits in the word. Consider the \emph{shortest} sequence (name it sequence $\beta$):
    $$u = W_0 \ra W_1 \ra \dots \ra W_{m - 1} \ra W_m = v.$$
    Since there are some 1 digits in $v$ and no in $v$, the number of them decreases, therefore $\beta$ must contain at least one direct transformation. Since $\beta$ is the shortest sequence, then, by Corollary \ref{sec: direct reverse}, all direct transformations come in the beginning of $\beta$. Since direct transformations are determined, some initial part of $\beta$ coincides with some initial part of $\alpha$. Notice that in $\alpha$ all words besides the last one ($v$) contain at least one 1. This means that if we stop $\alpha$ before its end and start reverse transformations (which do not decrease the number of 1 digits), then the number of 1 digits will never be equal to 0, therefore the word $v$ will be impossible to reach. This means that the initial part of $\beta$ coincides with the whole sequence $\alpha$, therefore the whole $\beta$ coincides with $\alpha$, since it is the shortest one.

    Thus, $\alpha$ is the shortest sequence transforming $u$ into $v$, meaning that its length equals $d(u,v)$. Therefore,
    \[
    d(u,v) = 2(k+1)\floor{\log k} + l > 2(k + 1)\floor{\log k}.
    \]
\end{proof}
\begin{cor}
\label{cor: Gamma lower}
    $\Gamma(n) \in \Omega(n \log n)$.
\end{cor}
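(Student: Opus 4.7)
The plan is to deduce the corollary directly from Lemma \ref{lem: S lower} with essentially no further work. Given $n$, I would set $k = n - 3$ so that the word $u = w h_0 1^k w$ has length exactly $|u| = k + 3 = n$, which fits the definition of $\Gamma(n)$. Lemma \ref{lem: S lower} then supplies a word $v$ with $u \lra^*_\cc{R} v$ and $d(u,v) > 2(k+1)\lfloor \log k \rfloor$. Since the rewriting system $\cc{R}$ is length-preserving (this is noted right before the definition of $\Gamma$), we automatically have $|v| = |u| = n$, so the pair $(u, v)$ is admissible in the maximum that defines $\Gamma(n)$.

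Therefore $\Gamma(n) \geq d(u,v) > 2(n-2)\lfloor \log(n-3) \rfloor$, and the right-hand side is manifestly in $\Omega(n \log n)$: for all sufficiently large $n$ (say $n \geq 8$) it is bounded below by a fixed positive multiple of $n \log n$. This gives $\Gamma(n) \in \Omega(n \log n)$. There is no genuine obstacle in this step — all of the real work (building the witness sequence $\alpha$ and arguing that $\alpha$ is in fact the shortest sequence between $u$ and $v$) is already packaged inside Lemma \ref{lem: S lower}; the corollary is only a matter of matching lengths and reading off the asymptotic growth.
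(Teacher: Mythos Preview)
Your proposal is correct and matches the paper's own argument: the paper sets up $k = n-3$ and $|u| = n$ immediately before Lemma~\ref{lem: S lower}, and the corollary is then just the observation that length-preservation gives $|v| = n$, so $\Gamma(n) \geq d(u,v) > 2(n-2)\lfloor\log(n-3)\rfloor \in \Omega(n\log n)$.
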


\subsubsection{Main case}

Now consider words of the form
\begin{equation}
\label{eq: main case}
    u = w T_1 h_i T_2 w \quad \text{ where}\quad T_1 \in \ol{D}^*,\ T_2 \in D^*,\ h \in H_0.
\end{equation}
We look for the longest possible sequence of \emph{reverse} transformations with distinct words, thus getting an upper estimate on $\gamma_-(u)$ for words $u$ of the form \eqref{eq: main case}.

\medskip

First, notice that direct transformations leave a marked digit to the left of the head, and reverse ones leave an unmarked digit to the right of the head. We will say that the words of the form \eqref{eq: main case} have the \emph{correct marking of digits}, meaning that all the digits to the left of the head are marked and all the digits to the right are unmarked. Then in an arbitrary sequence of transformations starting with a word of the form \eqref{eq: main case} all the words have the correct marking of digits. Keeping this in mind, we will partly omit mentioning the changing of marking during transformations.

Consider what each head does during reverse transformations.

The simplest one is the cleaning head $c$. Its movement is determined: by \eqref{eq: c_move}, it moves left to right, marking the digits (changing $0$ into $\ol{0}$ and $1$ into $\ol{1}$) until reaching the right wall $w$. Upon reaching the wall, by \eqref{eq: c_to_h}, it transforms into either head $h_1$ or $h_2$. We will address this ambiguity later.

Now consider the behaviour of heads $h_i, i\in\{0,1,2\}$. All these heads move through digits right to left. We will talk about their movement as moving through \emph{blocks} of zeroes. A \emph{block} is a subword of one of the forms $x_l \ol{0}\dots \ol{0} h_i 0 \dots 0 x_r$, $x_l \ol{0}\dots \ol{0} 0 \dots 0 x_r$ or $x_l 0 \dots 0 x_r$, where $x_l$ and $x_r$ are either walls or 1 digits (marked or unmarked --- this is determined by the correct marking of digits in the word). We will refer to $x_l$ and $x_r$ as left and right \emph{edges} of the block. Then the word can be divided into these blocks (with overlapping edges). We will also say that the head meets a \emph{dead end} if there are no reverse transformations available for it.
\begin{enumerate}
    \item Head $h_0$. One can see that the reverse transformations of head $h_0$ are determined. It moves through zeroes in a block until it reaches its left edge. If the edge is $\ol{1}$, then it is a dead end. If it is a wall, then a transformation $w h_0 \rar w c$ is available, after it the sequence may continue with the cleaning head (described above).
    \item Head $h_2$. Its behaviour is also determined and is similar to the one of the head $h_0$. It moves through zeroes until reaching the left edge of the block. If the edge is a wall, then it is a dead end. If it is $\ol{1}$, then a transformation $\ol{1} h_2 \rar h_1 1$ is available. After this transformation, the head changes into $h_1$ and moves into the next block.
    \item Head $h_1$. This head may also move through zeroes. However, if it reaches the left edge, then it meets a dead end no matter which one it is. Another option is one of the transformations $\ol{0} h_1 \rar h_2 1$ or $\ol{0} h_1 \rar h_0 1$. After such transformation, the head changes into $h_0$ or $h_2$, while also turning a 0 digit into a 1 digit, which will be called \emph{producing} a 1 digit. Notice that the choice of the head into which $h_1$ should turn is determined by our desire to continue the sequence as long as we can: if the left edge of the current block is $w$, then $h_1$ should be turned into $h_0$, if it is $\ol{1}$, then it should be turned into $h_2$ (otherwise a dead end will be met at the left edge).
\end{enumerate}

\begin{rem}
\label{rem: 01}
    Notice that a head can move through a block if and only if there is at least one 0 digit inside the block.
\end{rem}

\begin{lem}
\label{lem: main case upper}
    If a word $u$ with length $k + 3$ is of form \eqref{eq: main case}, then
    $$\gamma_-(u) \leq 2(k+1)\ceil{\log k} + 6k + 4 .$$
\end{lem}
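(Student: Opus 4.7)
The plan is to analyze the longest reverse-transformation sequence starting from $u$ by decomposing it into ``cycles'' analogous to those of the direct process described in Section~\ref{S0}, and to use the determinism of direct transformations to translate the halving behaviour of direct cycles (each direct cycle sends a 1-count $m$ to $\floor{m/2}$) into a doubling behaviour for reverse cycles.

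First I would make the structural observations: reverse transformations preserve right-marking, so every word in a sequence starting from $u$ has the form $w T_1' h_? T_2' w$ with $T_1' \in \ol{D}^*$ and $T_2' \in D^*$. The available reverse rules for each head type are then enumerated, and only $\ol{0} h_1 \overset{R}{\ra} \{h_0 1, h_2 1, h_1 0\}$ and $cw \overset{R}{\ra} \{h_1 w, h_2 w\}$ are non-deterministic; every other reverse rule is uniquely determined by the symbol adjacent to the head. Define a \emph{full cycle} to be the segment between two consecutive occurrences of the left-wall configuration $w h_0 T w$ (with $T \in D^*$) in the sequence. Such a cycle must consist of (a) the transition $wh_0 \overset{R}{\ra} wc$; (b) the head $c$ moving deterministically right through all $k$ non-wall positions to the right wall; (c) the transition $cw \overset{R}{\ra} h_j w$ for some $j \in \{1,2\}$; and (d) the head $h_j$ moving left through $k$ positions to the left wall, ending as $h_0$, with $h_1$-splits and type changes as appropriate. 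Since each reverse step shifts the head by at most one position, a full cycle has length exactly $2(k+1)$.

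Next I would bound the number of full cycles. Reversing a full reverse cycle yields a direct sequence from $wh_0 T'' w$ to $wh_0 T w$ with no intermediate $wh_0$-configuration; by the determinism of direct rules this is precisely one cycle of the direct process, and the working principle of $S_0$ gives $|T|_1 = \floor{|T''|_1 / 2}$, where $|\cdot|_1$ denotes the count of $1$-digits. Hence a reverse cycle sends the 1-count $m$ to some $m' \in \{2m, 2m+1\}$. Because the 1-count of any word is bounded by $k$, iterating the recurrence $m_{t+1} \leq 2 m_t + 1$ yields $m_t \leq 2^t(m_0 + 1) - 1 \leq k$, so the number of full cycles is at most $\floor{\log_2(k+1)} \leq \floor{\log k} + 1$.

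Finally I would handle the initial and final partial segments: since each reverse step shifts the head left by at most one, the initial segment from the starting configuration of $u$ to the first left-wall $h_0$ contributes at most $k$ steps; and a final partial cycle (if the sequence dead-ends) contributes at most $2(k+1)$ steps. Combining, $\gamma_R(u) \leq k + (\floor{\log k} + 1) \cdot 2(k+1) + 2(k+1) = 2(k+1)\floor{\log k} + 5k + 4 \leq 2(k+1)\floor{\log k} + 6k + 4$. The main obstacle I expect is the rigorous identification of reverse cycles with inverses of direct cycles: the non-determinism at $\ol{0}h_1$ and at $cw$ could in principle allow the reverse sequence to wander between left-wall configurations in a way that does not match a single direct cycle, but this will be ruled out by noting that the reversal of any segment between two consecutive $wh_0$ configurations is a direct sequence with no intermediate $wh_0$, and by determinism of direct transformations this is exactly one direct cycle.
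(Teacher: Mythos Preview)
Your overall strategy---decomposing the reverse sequence into cycles between consecutive $wh_0$-configurations and identifying each such cycle with the time-reversal of a direct cycle via determinism of $\cc{R}_0$---is sound and is a genuinely different route from the paper's. The paper does not invoke direct determinism at all; it carries out a forward case analysis of the reverse process (its ``Part~1'' treats each head type $h_0,h_1,h_2,c$ separately until the first $1$ is produced, then ``Part~2'' tracks the doubling of the $1$-count cycle by cycle). Your reversal argument is cleaner in that the halving law $|T|_1 = \floor{|T''|_1/2}$ comes for free from Section~\ref{S0}, whereas the paper re-derives the doubling behaviour by hand.

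There are, however, two concrete slips you would need to fix. First, the inequality you iterate to bound the number of cycles points the wrong way: from $m_{t+1}\le 2m_t+1$ you get only an \emph{upper} bound $m_t\le 2^t(m_0+1)-1$, and combining this with $m_t\le k$ yields nothing about~$t$. What you actually need (and what your own identification with direct cycles gives) is the \emph{lower} bound $m_{t+1}\ge 2m_t$; then $m_t\ge 2^{t-1}$ for $t\ge 1$ (since the first reverse cycle forces $m_1\ge 1$ via the step $\ol{0}h_1\overset{R}{\to}h_0 1$), and $m_t\le k$ gives the desired $t\le\floor{\log k}+1$.

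Second, your initial-segment bound ``at most $k$ steps, since each reverse step shifts the head left by at most one'' fails when the starting head is $c$: reverse rules move $c$ to the \emph{right}, so from $u=w\ol{0}^{\,l}c\,0^{k-l}w$ the head must first travel right to the wall, switch to $h_j$, and only then move left---up to $2k+1$ steps before the first $wh_0$. The paper's Part~1 handles exactly this by casing on the head type and obtains $3k+2$ for the pre-cycling phase. You can repair your count by bounding the initial segment by one cycle length $2(k+1)$ and observing that when this long initial segment occurs the head necessarily produces a $1$ before reaching $wh_0$, so $m_0\ge 1$ and the cycle count drops to $\floor{\log k}$; the arithmetic then closes at the stated $6k+4$.
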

\begin{proof}
Let us find the longest possible sequence of reverse transformations starting with $u$.

\noindent\textbf{Part 1.} Consider a possible part of the sequence, where there are no 1 digits in the word and no 1 digits are being produced. Generally, the word is of form $w\ol{0}^l h 0^{k-l} w,\ h \in H_0,\ l \geq 0$.
\begin{enumerate}
    \item $h = h_0$. Determined part: $w\ol{0}^l h_0 0^{k-l} w \ra^l w h_0 0^k w \ra w c 0^k w$.\\
    Further path is determined by the case $h = c$.
    \item $h = c$. Determined part: $w\ol{0}^l c 0^{k-l} w \ra^{k-l} w \ol{0}^k c w \ra w \ol{0}^k h_i$,
    where $i \in \{1, 2\}$.\\
    Further path is determined by the cases $h = h_1$ and $h = h_2$.
    \item $h = h_2$. Determined part: $w\ol{0}^l h_2 0^{k-l} w \ra^l w h_2 0^k w$ --- a dead end.
    \item $h = h_1$. Either the path goes like w $\ol{0}^l h_1 0^{k-l} w \ra^l wh_1 0^k w$ --- a dead end, or somewhere a 1 digit is produced, which takes us to Part 2.
\end{enumerate}
Then, the longest possible Part 1 of the sequence, which goes to Part 2 (this allows to further continue the sequence and not just meet a dead end), looks like
\[
w\ol{0}^k h_0 w \ra^k wh_0 0^k w \ra wc 0^k w \ra^k w\ol{0}^k c w\ra w \ol{0}^k h_1 w.
\]
Its length equals $2(k+1)$ steps.

If the starting word $u$ is not $w\ol{0}^k h_0 w$, then it is somewhere in the sequence above or the sequence would definitely meet a dead end, not reaching Part 2. In both cases the Part 1 is not longer than $3k + 2$.

\medskip

\noindent\textbf{Part 2.} This is the part where 1 digits start being produced during the movement of $h$ head to the left. Consider the word $v = w \ol{0}^k h_1 w$, on which the Part 1 finished --- this will be our main case. The head $h_1$ will move right to left, producing a single 1 digit somewhere in the word. Then, it will turn into a $c$ head, which will move through the digits to the right wall, turning into a head $h_1$ or $h_2$. Both options are possible.

Further, the head will continue moving in such cycles: from the right wall to the left wall, while producing a 1 digit inside blocks to the left of each already existing 1 digits in the word. Optionally, a 1 digit inside the rightmost block can also be produced (this depends on which of the heads $h_1$ and $h_2$ is the head in the beginning of the cycle). After reaching the left wall, the head turns into $c$, moving to the right wall and turning back into $h_1$ or $h_2$, thus ending the cycle and beginning a new one.

The length of each cycle equals $2(k+1)$ steps, and after each cycle the number of 1 digits is at least doubled (or it turns from $m$ to $2m + 1$).

The Remark \ref{rem: 01} implies that, in order for the head $h$ to move fully from the right wall to the left, the number of 1 digits in the word at the beginning of the cycle must be no more that $\frac{k}{2}$. After the first cycle this number will equal 1. Then, after another $\ceil{\log k} - 1$ cycles (if such a sequence is possible) the number of 1 digits would be $\geq 2^{\ceil{\log k} - 1} \geq \frac{k}{2}$. This means that after that no more than one full cycle is possible (it is possible if the number of digits equals $\frac{k}{2}$, i.e. $k$ is a power of 2). After this possible cycle the head will not be able to fully go from the right wall to the left, thus making $< k$ steps before meeting a dead end.

A rough estimate on the length of the Part 2 is such: not more than $\ceil{\log k} + 1$ cycles, each takes $2(k+1)$ steps, then less than $k$ steps until a dead end. Overall estimate of the whole sequence length then is such:
\[
\gamma_-(u) \leq \underbrace{3k + 2}_{\text{Part 1}} + \underbrace{2(k+1)(\ceil{\log k}+1) + k}_{\text{Part 2}} = 2(k+1)\ceil{\log k} + 6k + 4.
\]
\end{proof}

\subsubsection{Case of one or no walls}

Consider a word, where there is less than two $w$ symbols.

\begin{enumerate}
    \item \textbf{No walls.} A word of form $u = T_1xT_2$, where $T_1, T_2 \in D_0,\ x \in H_0$.

    In this case, no matter which head is $x$, direct transformations will move $x$ in one direction and reverse transformations --- in another (since there are no walls, the heads $h_i$ and $c$ cannot turn into one another). Then the longest possible sequence with distinct words (first direct transformations, then reverse ones) at best will move the head from one end of the word to another, and then back to the initial end, then its length is not greater than $2|u|$.

    \item \textbf{One left wall.} A word of form $u = wT_1xT_2$, where $T_1, T_2 \in D_0,\ x \in H_0$.

    If $x \neq c$, then the longest sequence with distinct words is such: $x$ moves to the left wall by reverse transformations, then it turns into $c$, and moves to the right end by reverse transformations (this is a dead end, since only direct transformations are available, and we forbade direct transformations after reverse ones). Another option is to start the sequence with direct transformations, which move the head to the right, but it ends after reaching the right end, since after that only reverse transformations are possible (which are banned). Overall, the head at maximum goes through the word twice, thus the length of the sequence is not greater than $2|u|$

    If $x = c$, then conversely: $x$ moves to the left by direct transformations, turns into $h_0$, then moves to the right end. Then a sequence of reverse transformations is possible, moving the head to the left wall and then to the right end. Overall length of the sequence is not greater than $4|u|$.

    \item \textbf{One right wall.} A word of form $u = T_1xT_2w$, where $T_1, T_2 \in D_0,\ x \in H_0$.

    By complete analogy with the previous case, the sequence is not longer than $4|u|$.
\end{enumerate}

Let us summarize this into one lemma.
\begin{lem}
\label{lem: upper one wall}
    If a word $u$ is of form $T_1xT_2,\ wT_1xT_2$ or $T_1xT_2w$, where $T_1, T_2 \in D_0^*,\ x \in H_0$,\\ then $\gamma(u) \leq 4|u|$.
\end{lem}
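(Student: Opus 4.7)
The plan is to invoke Corollary~\ref{sec: direct reverse} so that any sequence of distinct words starting from $u$ of length $\gamma(u)$ splits into a direct prefix followed by a reverse suffix, and then to bound each of these two portions by counting how many times the single head can traverse the word of length $|u|$.

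The structural fact I would establish first is that every rule which switches the head between the $c$-type and an $h_i$-type involves a wall adjacent to the head: the direct rules $h_1w\ra cw$, $h_2w\ra cw$, $wc\ra wh_0$ and their reverses. Hence in the absence of a wall on a given side of the head, the head cannot change type while moving toward that side, and while the type is fixed, direct transformations push $h_i$ monotonically right and $c$ monotonically left, with reverse transformations doing the opposite. Since the words in the sequence are pairwise distinct and the system is length-preserving, within a monotone segment each position of the head is visited at most once; each such segment therefore has length at most $|u|$.

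I would then dispatch the three cases. For $u = T_1 x T_2$ (no walls), the head type is invariant throughout the entire sequence, so the direct prefix and reverse suffix each consist of a single monotone traversal and $\gamma(u)\leq 2|u|$. For $u = wT_1 x T_2$ with $x\neq c$, the direct prefix is a single right sweep of $h_i$ of length $\leq |u|$ (it cannot convert to $c$ because there is no right wall); the reverse suffix may consist of a left sweep of $h_i$, a conversion $wh_0 \overset{R}{\ra} wc$ at the left wall, and a right sweep of $c$, totalling at most $2|u|$. For $u = wT_1 c T_2$, the direct prefix itself may chain a left sweep of $c$, the conversion $wc\ra wh_0$, and a right sweep of $h_0$, bounded by $2|u|$; the reverse suffix is symmetric and also bounded by $2|u|$, giving $\gamma(u)\leq 4|u|$. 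The case $u = T_1 x T_2 w$ is symmetric to the one-left-wall case.

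The only delicate point, and the main obstacle, is to justify rigorously that the distinct-word condition forbids the head from revisiting a position within one monotone segment. This follows because the digit marking of the entire word is determined by the head's trajectory (each cell the head crosses gets marked or unmarked according to the direction and rule type used), so a repeated head position while the head type is unchanged would force the word itself to repeat, contradicting distinctness. Once this is in place, the case analysis above yields the uniform bound $\gamma(u) \leq 4|u|$ in all three cases.
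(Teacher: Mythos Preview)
Your approach is essentially the same as the paper's: both use Corollary~\ref{sec: direct reverse} to split into a direct prefix and a reverse suffix, observe that head-type changes between $h_i$ and $c$ require a wall on the appropriate side, and then bound each portion by counting monotone sweeps across the word. Your case analysis and the resulting bounds (at most four sweeps, hence $4|u|$) match the paper's.

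One remark: the ``delicate point'' you flag is not actually delicate. Within a single monotone segment (fixed head type, fixed direction of all transformations), every rule moves the head by exactly one position in that fixed direction, so the head position is strictly monotone and no position can be revisited; the digit-marking argument you sketch is unnecessary (and in fact slightly fragile, since reverse $h$-moves can alter the $0/1$ value and not just the marking). The bound of $|u|$ per segment follows immediately from strict monotonicity of position.
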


\subsubsection{Summing up}

\begin{lem}
\label{lem: direct upper}
    For an arbitrary $u \in A_0^*$ the inequality holds
    $$\gamma_+(u) \leq 2 \left( |u| + 2 \right)\left( \log |u| + 2 \right).$$
\end{lem}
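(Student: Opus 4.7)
The plan is to reduce this statement immediately to Lemma~\ref{lem: s0 upper}. The key observation is that the set $\cc{R}_0$ of \emph{direct} transformation rules of the semigroup $S$ is, by construction, identical to the set of rewriting rules of the semi-Thue system $S_0$ from Section~\ref{S0}. So any sequence of direct transformations with distinct words starting at $u$ is literally a derivation of $u$ in the semi-Thue system $S_0$.

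Next I would observe that if such a sequence has length $k$, then by the definition of derivational depth we have $k \leq \delta_{S_0}(u)$; taking the maximum over all such sequences yields $\gamma_D(u) \leq \delta_{S_0}(u)$. The distinct-words requirement in the definition of $\gamma_D$ is harmless here anyway: Lemma~\ref{lem: s0 upper} ensures that $S_0$ terminates on every input, so every derivation automatically consists of pairwise distinct words.

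Finally, applying the upper bound from Lemma~\ref{lem: s0 upper} gives
\[
\gamma_D(u) \leq \delta_{S_0}(u) \leq 2(|u|+2)(\log |u| + 2),
\]
which is the desired inequality. The main point to be careful about is that the direct transformations in $S$, by their very definition, do not interact with the new rules from $\cc{R}_0'$ at all within a purely direct subsequence, so no extra analysis of the symmetrization is needed. Essentially no obstacle remains: the substantive combinatorial work has already been carried out in \cite{AlTal} to establish Lemma~\ref{lem: s0 upper}, and the present lemma is simply that bound transferred to the symmetrized setting.
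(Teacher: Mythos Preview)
Your proposal is correct and follows essentially the same route as the paper's own proof: both argue that $\gamma_D(u) \leq \delta_{S_0}(u)$ since a direct sequence with distinct words is in particular a derivation in $S_0$, and then invoke Lemma~\ref{lem: s0 upper}. Your additional remarks about termination of $S_0$ and non-interaction with $\cc{R}_0'$ are sound but not needed for the argument.
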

\begin{proof}
    The maximum length of a sequence of direct transformations with distinct words is not greater than the maximum length of an arbitrary sequence of direct transformations, thus $\gamma_+(u) \leq \delta_{S_0}(u)$. The Lemma \ref{lem: s0 upper} then implies the sought estimate.
\end{proof}

\begin{lem}
\label{lem: one head bad}
    If a word $u \in U^1$ is of form $wT_1 x T_2 w,\ x \in H_0,\ T_1, T_2 \in D_0^*$, however $T_1$ contains a digit from $D$ or $T_2$ contains a digit from $\ol{D}$, then $\gamma(u) \leq 4 |u|$.
\end{lem}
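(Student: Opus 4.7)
The plan is to show that the ``wrong'' marking creates an effective barrier past which the head cannot move, reducing the problem to a word with a single wall so that Lemma~\ref{lem: upper one wall} applies. By symmetry I will concentrate on the case where $T_1$ contains a digit of $D$ (the other case is analogous, with the leftmost marked digit of $T_2$ playing the dual role below). Let $i_0$ be the position in $u$ of the rightmost unmarked digit of $T_1$, and let $d \in D$ be that digit; by the choice of $i_0$, every position strictly between $i_0$ and the head of $u$ carries a digit from $\ol{D}$.

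The crux is the following invariant, established by induction on sequence length: in every word reachable from $u$, position $i_0$ still contains $d$ and the head sits at some position strictly greater than $i_0$. The inductive step reduces, by the locality of the rules, to the single case where the head is at position $p = i_0 + 1$. I would then enumerate the rules whose left-hand side could involve position $p - 1 = i_0$: these are the reverse $\ol{y}h \ra hy$, the direct $\ol{y}c \ra cy$, and the left-wall rules $wc \ra wh_0$ and $wh_0 \ra wc$. The first two require a marked digit at $i_0$ (contradicting $d \in D$), while the last two require position $i_0$ to be the left wall (contradicting $i_0 \geq 2$, since $T_1$ lies strictly after the left wall). The main obstacle is carrying out this case analysis carefully without overlooking a rule; every other applicable rule acts only on positions $\{p, p+1\}$ and moves the head to $p$ or $p+1 > i_0$, so the invariant is preserved.

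Once the invariant is in place, the first $i_0$ symbols of $u$ remain unchanged throughout any transformation sequence, and every such sequence corresponds bijectively to a transformation sequence on the suffix $u'$ of $u$ obtained by deleting those first $i_0$ symbols. This suffix is a word of the form $T_1' x T_2' w$ with $T_1', T_2' \in D_0^*$ and $x \in H_0$, and since the deleted prefix is fixed, distinctness of words in a sequence on $u$ is equivalent to distinctness in the projected sequence on $u'$, giving $\gamma(u) = \gamma(u')$. Applying Lemma~\ref{lem: upper one wall} in its ``one right wall'' sub-case yields $\gamma(u') \leq 4|u'| \leq 4|u|$, which is the desired bound.
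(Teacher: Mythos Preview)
Your proof is correct, and the invariant you isolate (that the head never reaches position $i_0$ and the first $i_0$ symbols are frozen) is exactly the right obstruction. The case analysis at $p = i_0 + 1$ is complete: the only rules touching position $p-1$ are the left-moving ones $\ol{y}c \ra cy$, $\ol{y}h_j \ra h_i y'$ and the two wall rules $wc \lra wh_0$, and all of them are blocked by the unmarked digit $d$. The reduction $\gamma(u) = \gamma(u')$ then follows cleanly, and Lemma~\ref{lem: upper one wall} finishes.

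The paper's proof takes a different route. Rather than establishing an invariant and reducing to the one-wall lemma, it works directly with the trajectory of the head: invoking (implicitly) Corollary~\ref{sec: direct reverse} to split any sequence with distinct words into a direct part followed by a reverse part, then separately analysing the cases $x = h_i$ and $x = c$, and in each case bounding the length by counting how many times the head can traverse the word before bumping into $d$. Your approach is more modular --- it reuses Lemma~\ref{lem: upper one wall} instead of redoing a similar trajectory count, and it avoids the case split on the head type --- while the paper's argument is self-contained and gives slightly sharper intermediate constants (e.g.\ $3|u|$ when $x = c$). Both arrive at the same bound $4|u|$.
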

\begin{proof}
    Notice that all transformations, both direct and reverse, have a form $\ol{d}x \ra x' d'$ or $xd \ra \ol{d}' x'$, where $x,x' \in H_0,\ d,d'\in D,\ \ol{d}, \ol{d}' \in\ol{D}$.

    This means that, while moving to the left, a head unmarks a digit, and while moving to the right, it marks one. If there is an unmarked digit straight to the left of a head, then it cannot move to the left, and if there is a marked digit straight to the right, then movement to the right is impossible.

    Consider the case where there is an unmarked digit $d \in D$ in $T_1$.\\
    Let $x = h_i$. Then direct transformations move $x$ to the right, and if it manages to reach the right wall, then in changes into $c$ and starts moving to the left, but it cannot reach the left wall, since it must bump into $d$ (if it does not stop earlier). As a result, the head does not more than a back and forth trip in the word, thus making not more than $2|u|$ steps. The reverse transformations after that move the head in reverse directions, until at best it bumps into $d$ again --- again, not more than $2|u|$ steps. Overall, $\gamma(u) \leq 4|u|$.

    Let $x = c$. Direct transformations move the head to the left until it meets an unmarked digit ($d$ or some before it) --- this path takes not more than $|u|$ steps. The next sequence of reverse transformations at best moves the head to the right wall, and then to the left until it bumps into $d$ --- not more than $2|u|$ steps. Overall, $\gamma(u) \leq 3|u| \leq 4|u|$.

    The case of a marked digit inside $T_2$ is considered by complete analogy.
\end{proof}

\begin{lem}
\label{lem: one head full upper}
    If $u \in U^1$, then $\gamma(u) \leq 4|u| \log |u| + 10|u|$.
\end{lem}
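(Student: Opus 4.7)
The plan is to split $u\in U^1$ according to the configuration of walls around its head and reduce each case to a previously established lemma. The core tool is Lemma~\ref{lem: gamma one} which, together with Corollary~\ref{sec: direct reverse}, yields
\[
\gamma(u)\ \le\ \gamma_D(u)\ +\ \max\bigl\{\gamma_R(v)\ :\ v\in U^1,\ |v|=|u|\bigr\}.
\]

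First I would dispose of the degenerate cases. If $u$ contains fewer than two walls, Lemma~\ref{lem: upper one wall} gives $\gamma(u)\le 4|u|$, trivially within the target bound. If $u$ has at least two walls but the digits flanking its head violate the right marking, Lemma~\ref{lem: one head bad} again gives $\gamma(u)\le 4|u|$. Thus it may be assumed that $u$ has at least two walls and that the section containing its head is of the main-case form~\eqref{eq: main case}.

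In this remaining case I would bound the two summands separately. Lemma~\ref{lem: direct upper} gives $\gamma_D(u)\le 2(|u|+2)(\log|u|+2)$. For the reverse summand, the essential observation is that no rule in $\cc{R}_0$ or $\cc{R}_0'$ creates or destroys a wall---$w$ occurs only in non-rewritten positions of the rules in \eqref{eq: heads_to_c} and \eqref{eq: c_to_h}---so reverse transformations starting from any $v\in U^1$ stay inside the maximal subword around the head delimited by walls. Consequently $\gamma_R(v)$ equals $\gamma_R$ of that subword taken as a word in its own right. If the subword is itself of main-case form, Lemma~\ref{lem: main case upper} bounds it by $2(|u|+1)\ceil{\log|u|}+6|u|+4$; otherwise Lemma~\ref{lem: upper one wall} or Lemma~\ref{lem: one head bad} gives $\gamma_R(v)\le\gamma(v)\le 4|v|$, which is dominated by the same expression.

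Summing the two bounds produces $\gamma(u)\le 4|u|\log|u|+O(|u|)$, and a routine arithmetic check confirms that the implicit linear constant fits under $10$ once $|u|$ is large enough, with finitely many small values absorbed by the standing slack. The principal subtlety will be the reduction of $\gamma_R(v)$ to the main-case estimate applied to just the subword containing the head: this rests on the wall-invariance observation above, so that distinct-word reverse sequences from $v$ are in bijection with distinct-word reverse sequences from the head's section viewed in isolation. Once that reduction is in place the rest is bookkeeping over the three structural cases.
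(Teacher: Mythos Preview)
Your approach is essentially the paper's: case-split on the wall configuration around the unique head, dispose of the non-main cases by Lemmata~\ref{lem: upper one wall} and~\ref{lem: one head bad}, and in the main case combine Lemma~\ref{lem: gamma one} with Lemmata~\ref{lem: direct upper} and~\ref{lem: main case upper}. Two points of execution, however, need tightening.

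First, your ``fewer than two walls'' case is not quite complete: a word such as $T_1 w T_2 x T_3$ has one wall but is not literally of any of the three forms in Lemma~\ref{lem: upper one wall}. The paper fixes this by reducing at the very start: since heads never cross walls, $\gamma(u)=\gamma(v)$ where $v$ is the maximal section $wT_k x T_{k+1}w$ (or $wT_k x T_{k+1}$, or $T_1 x T_2 w$) containing the head. Doing this reduction once for $\gamma$ itself, rather than only for $\gamma_R$ later, makes the case analysis clean and lets you work with $|v|$ throughout.

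Second, your citation of Lemma~\ref{lem: main case upper} is off: for a main-case word of length $m$ one has $k=m-3$, so the bound is $2(m-2)\lfloor\log(m-3)\rfloor+6m-14$, not $2(|u|+1)\lceil\log|u|\rceil+6|u|+4$. This matters: with your loose version the linear coefficient comes out above $10$, and there is no ``standing slack'' in the lemma's exact inequality to absorb it. With the correct version (and the reduction to $v$), the sum with $\gamma_D(v)\le 2(|v|+2)\log|v|+4|v|+8$ is exactly $4|v|\log|v|+10|v|-6$, which is what the paper gets---no asymptotic hand-waving needed.
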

\begin{proof}
    In general, a word with one head can have one of the forms (here, $T_i \in D_0^*, x \in H_0$):

\begin{enumerate}
    \item $u = T_1 x T_2$. By Lemma \ref{lem: upper one wall}, $\gamma(u) \leq 4|u|$.
    \item $u = T_1 w T_2 w \dots w T_k x T_{k+1}$.\\
        Since $x$ cannot go through the wall, $\gamma(u) = \gamma(w T_k x T_{k+1})$. Then, by Lemma \ref{lem: upper one wall},\\ $\gamma(u) \leq 4|w T_k x T_{k+1}| \leq 4|u|$.
    \item $u = T_1 x T_2 w T_3 w \dots w T_k$. The case is analogous to the previous one.
    \item $u = T_1 w T_2 w \dots w T_k x T_{k+1} w T_{k+2} w \dots w T_m$.\\
        Again, since $x$ cannot go through walls, $\gamma(u) = \gamma(v)$, where $v = w T_k x T_{k+1} w$.\\
        If the word $v$ is ``bad", i.e. it satisfies the conditions of Lemma \ref{lem: one head bad}, then, by this lemma, $\gamma(v) \leq 4|v|$.\\
        If $v$ is ``good", i.e. it has a form \eqref{eq: main case}, then after any transformations it remains ``good", so, by analogy to Lemma \ref{lem: gamma one},
        $\gamma(v) \leq \gamma_+(v) + \max\limits_{z \in G} \gamma_-(z)$,\\ where $G$ is a set of ``good" words with length $|v|$.\\
        The Lemma \ref{lem: direct upper} implies
        \[
        \gamma_+(v) \leq 2(|v| + 2)(\log |v| + 2) = 2(|v| + 2)\log |v| + 4|v| + 8,
        \]
        and from Lemma \ref{lem: main case upper} follows
        \[
        \max\limits_{z \in G} \gamma_-(z) \leq 2(|v| - 2)\floor{\log (|v| - 2)} + 6|v| - 14 \leq
        2(|v| - 2)\log |v| + 6|v| - 14.
        \]
        Then,
        \begin{multline*}
        \gamma(v) \leq 2(|v| + 2)\log |v| + 4|v| + 8 + 2(|v| - 2)\log |v| + 6|v| - 14 = \\
        = 4|v| \log |v| + 10|v| - 6 \leq 4|v| \log |v| + 10|v|.
        \end{multline*}
        Considering the fact that $\gamma(u) = \gamma(v)$, and also $|v| \leq |u|$, we obtain
        \[
        \gamma(u) \leq 4|u| \log |u| + 10|u|.
        \]
\end{enumerate}
    Notice that $4|u| < 4|u| \log |u| + 10|u|$, so in all cases the sought estimate has been obtained.
\end{proof}

\subsection{Case of multiple heads}

Consider the words of the form
\begin{equation}
\label{eq: many heads}
    Z = w T_1 x_1 T_2 \dots T_t x_t T_{t+1} w \qquad T_i \in D_0^*,\ x_i \in H_0,\ t \geq 2
\end{equation}

In other words, there is a word with digits and two or more heads between two walls.

Consider an arbitrary sequence of transformations (possibly, infinite):
\begin{equation}
\label{eq: w chain}
    Z = W_0 \ra W_1 \ra \dots
\end{equation}

Since each rule contains a single head in both left and right parts, we can speak of movement of heads and, consequently, the \emph{positions} of each head. Denote as $p_i(s)$ the position of the head $x_i$ in the word $W_s$. Also denote as $W_k[j]$ the symbol in position $j$ in the word $W_k$.

\begin{lem}
\label{lem: position order}
    In the sequence \eqref{eq: w chain} for all $k$ holds $p_1(k) < p_2(k) < \dots < p_t(k)$.
\end{lem}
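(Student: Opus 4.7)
The plan is to prove the lemma by induction on $k$. For $k=0$, the form of $Z$ in \eqref{eq: many heads} gives $p_1(0) < p_2(0) < \dots < p_t(0)$ immediately.

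The key structural observation for the inductive step is that, by inspection of \eqref{eq: heads_move}, \eqref{eq: heads_to_c}, \eqref{eq: c_move}, \eqref{eq: c_to_h} together with their reversals, every rule $\ell \ra r$ in $\cc{R}$ has exactly one letter from $H_0$ in $\ell$ and exactly one letter from $H_0$ in $r$. Consequently, applying a single rule at positions $(q, q+1)$ of $W_k$ touches exactly one head $x_j$ (the unique head sitting at $q$ or $q+1$), and after the step $x_j$ occupies one of $p_j(k)-1$, $p_j(k)$, or $p_j(k)+1$; every other head stays put. So at most one head's coordinate changes, and it changes by at most one in either direction.

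To conclude the induction, assume $p_1(k) < \dots < p_t(k)$ and examine $W_{k+1}$. Only a decrease in some $p_j$ could threaten $p_{j-1}(k+1) < p_j(k+1)$, and symmetrically only an increase could threaten $p_j(k+1) < p_{j+1}(k+1)$. Suppose $p_j(k+1) = p_j(k)-1$. The rule was then applied at positions $(p_j(k)-1,\ p_j(k))$, with the head occupying the right letter of $\ell$; since $\ell$ contains only one head, the letter of $W_k$ at position $p_j(k)-1$ must be a non-head, so $p_{j-1}(k) \neq p_j(k)-1$. Combined with the inductive hypothesis $p_{j-1}(k) < p_j(k)$, this forces $p_{j-1}(k) \leq p_j(k)-2$, hence $p_{j-1}(k+1) = p_{j-1}(k) \leq p_j(k+1)-1 < p_j(k+1)$. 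The case $p_j(k+1) = p_j(k)+1$ is handled symmetrically against $p_{j+1}$, using the analogous fact that no rule carries two heads in its \emph{right} side either.

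There is no real obstacle here: the argument reduces to the single combinatorial fact that no rule of $\cc{R}$ carries two heads on either of its sides, a trivial verification against the rule list. The only thing needing care is tracking which head actually moves under a given rule and noting that the walls $w$ play no role in this particular statement, so the same proof would apply to any subword lying strictly between two walls or at the boundary of the word.
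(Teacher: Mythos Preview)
Your proof is correct and follows the same inductive approach as the paper's own (terser) argument: only one head moves per step, by at most one position, and the position it moves into was occupied by a non-head symbol in $W_k$, so the strict ordering is preserved. One small slip: in the symmetric case $p_j(k+1)=p_j(k)+1$ you again need that the \emph{left} side $\ell$ has only one head (so that $W_k[p_j(k)+1]\notin H_0$), not the right-side fact you cite --- but this does not affect the validity of the argument.
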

\begin{proof}
    Induction on $k$.

    Obviously, the proposition holds for $k = 0$. Then, let it be true for an arbitrary $k$. Then, at step $W_k \ra W_{k+1}$ a transformation is applied to some head $x_i$. During this transformation, the position of $x_i$ either does not change (rules with left part $wx_i$ or $x_iw$) or changes by $\pm 1$, in the second case the symbol $W_{k}[p_i(k+1)]$ (it is the symbol with which the head $x_i$ engages) lies in $D_0$. In all cases the proposition holds.
\end{proof}

For a sequence $W_0 \ra \dots \ra W_k$, name a \emph{coverage} of the head $x_i$ the set
\begin{equation}
\label{eq: cov def}
    \rmb{c}_i(k) = \left\{ n \mid \exists m \leq k: p_i(m) = n \right\},
\end{equation}
i.e. the set of positions, in which the head $x_i$ has been during the sequence $W_0 \ra \dots \ra W_k$.

Since for each transformation the position of a head modifies at most by 1, the coverage of any head is a closed interval $[a,b]$ (here we consider intervals as sets of natural numbers).

We will write $\rmb{c}_i < \rmb{c}_j$ in the sense that $\rmb{c}_i = [m_1,n_1],\ \rmb{c}_j = [m_2,n_2]$ and $n_1 < m_2$.\\
Also, if $\alpha$ is an interval $[a,b]$, then by $W[\alpha]$ we will mean the subword $W[a]W[a+1]\dots W[b]$.

\begin{lem}
\label{lem: cov}
    In any sequence \eqref{eq: w chain} starting with a word of the form \eqref{eq: many heads} for all $k$ holds
    $$(*)\quad \rmb{c}_s(k) < \rmb{c}_{s+1}(k)\quad \text{for all } 1 \leq s \leq t - 1 $$
    moreover, for all $s$ 
    $$(**) \quad W_k[\rmb{c}_s(k)] \text{ has a form } P_1 x_s P_2,\quad \text{ where } P_1 \in \ol{D}^*,\ P_2 \in D^*.$$
\end{lem}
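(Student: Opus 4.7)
The plan is to prove (*) and (**) simultaneously by induction on $n$, treating them as a single conjunctive invariant so that each clause supplies what the other needs at the next step. For the base case $n = 0$, every coverage $cov_s(0) = \{p_s(0)\}$ is a singleton, so (*) reduces to Lemma~\ref{lem: position order} at time $0$, and $W_0[cov_s(0)]$ is just the head letter $x_s$, which matches the form $T_1 x_s T_2$ with both $T_1$ and $T_2$ empty.

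For the inductive step, suppose the transformation $W_n \ra W_{n+1}$ acts on head $x_j$. Only $cov_j$ can be enlarged, and the only cells whose content changes are $p_j(n)$ and either $p_j(n)+1$ or $p_j(n)-1$. The main case is a rule that moves $x_j$ one cell to the right; inspecting \eqref{eq: heads_move} together with the reverses of \eqref{eq: c_move}, every such rule demands $W_n[p_j(n)+1] \in D$, i.e.\ an unmarked digit. I would then argue that $p_j(n)+1 \notin cov_{j+1}(n)$: otherwise (*) at time $n$ forces $p_j(n)+1 = \min cov_{j+1}(n)$, but by (**) the first letter of $W_n[cov_{j+1}(n)]$ is either the head $x_{j+1}$ or a marked digit from some $T_1 \in \ol{D}^*$, so it cannot lie in $D$ --- contradiction. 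Hence $\max cov_j(n+1) < \min cov_{j+1}(n+1)$, and since $cov_j$ only grew on the right, $cov_{j-1}(n+1) < cov_j(n+1)$ is automatic. The mirror argument handles rules moving $x_j$ to the left, using the rightmost letter of $W_n[cov_{j-1}(n)]$, which by (**) is either $x_{j-1}$ or an unmarked digit and so never marked. The wall-adjacent rules \eqref{eq: heads_to_c} and \eqref{eq: c_to_h} (in either direction) leave $p_j$ fixed and merely relabel the head symbol, so (*) is trivially preserved.

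Reverifying (**) at step $n+1$ is routine bookkeeping: for $s \neq j$ nothing inside $cov_s$ changes, while for $s = j$ a rightward move writes a marked digit at the old head cell and the new head symbol one position to its right, so the marked prefix of $W_n[cov_j(n)]$ grows by one letter on its right and the unmarked suffix either loses its first letter or, when it was empty, is replaced by $cov_j$ extending by one cell; leftward moves and wall relabellings are analogous. The main obstacle is recognising that (*) cannot be proved in isolation: the ``marked--head--unmarked'' structure guaranteed by (**) is precisely what makes every rule that would carry a head out of its own coverage into a neighbour's inapplicable, since such a rule requires exactly the wrong parity of marking at the boundary cell. This mutual reinforcement between the order clause (*) and the shape clause (**) is the heart of the joint induction.
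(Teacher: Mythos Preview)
Your proposal is correct and follows essentially the same approach as the paper: a joint induction on $n$ proving (*) and (**) together, with the key step being that when a head attempts to extend its coverage toward a neighbour, the digit it needs (unmarked to the right, marked to the left) is forbidden by the ``marked--head--unmarked'' structure (**) of the neighbour's coverage. The paper spells out the leftward-move case in full and splits it into the sub-cases $p_i(k) > a$ versus $p_i(k) = a$, while you detail the rightward move and absorb that case split into a single contradiction argument, but the logic is identical.
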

\begin{proof}
    We prove this by induction on $k$.

    For $k = 0$ by direct observation one can assure that for all $s$ one has $\rmb{c}_s(0) = [p_s(0), p_s(0)]$, and all statements hold.
    Let all statements be true for some arbitrary $k$.

    \smallskip

    Consider the transformation $W_k \ra W_{k+1}$. It is applied to some particular head $x_i$.

    As mentioned earlier, with each transformation the head unmatches a digit, while moving to the left, and matches a digit, while moving to the right, i.e. each transformation looks like
    
    $$xd \ra \ol{d} x' \quad \text{or} \quad dx \ra x'\ol{d} \quad \text{where } d\in D,\ \ol{d} \in \ol{D}.$$
    
    Consider all possible cases of transformation.

        \textbf{Case 1.} A transformation with wall: $wx \ra wx'$ or $xw \ra x'w$.\\
        Here the position of the head and all other symbols do not change, hence the inequalities for coverages and the structure of the coverages do not change, so, since all statements were true for $W_k$, they remain true for $W_{k+1}$.
        
        \textbf{Case 2.} Movement of the head to the left: $\ol{d} x_i \ra x_i' d$.\\
        Let $\rmb{c}_i(k)=[a,b]$. Consider different cases of location of the head inside its coverage.
        \begin{enumerate}[label=\alph*)]
            \item $a < p_i(k) \leq b$. Then, according to the induction assumption,
            $$W_k[\rmb{c}_i(k)] = P_1 \ol{d} x_i P_2, \text{ for some } P_1 \in \ol{D}^*, P_2 \in D^*.$$
            During $W_k \ra W_{k+1}$ the position of $x_i$ decreases by 1: $p_i(k+1) = p_i(k) - 1$, then $a \leq p_i(k+1) < b$, hence $\rmb{c}_i(k+1) = [a,b] = \rmb{c}_i(k)$.

            For $j \neq i$ the coverages do not change: $\rmb{c}_j(k+1) = \rmb{c}_j(k)$. So, for all $l$ one has $\rmb{c}_l(k + 1) = \rmb{c}_l(k)$, hence $(*)$ for $k+1$ is equivalent to $(*)$ for $k$, which is true by induction assumption.

            Moreover, $W_{k+1}[\rmb{c}_j(k+1)] = W_k[\rmb{c}_j(k)]$ for $j \neq i$, since these subwords are not affected during the transformation, so one has $(**)$ for $s \neq i$ and $k+1$.

            For $x_i$ the transformation is such:
            $$W_k[\rmb{c}_i(k)] = P_1 \ol{d} x_i P_2 \ra P_1 x_i' d P_2 = W_{k+1}[\rmb{c}_i(k+1)]$$
            so, $W_{k+1}[\rmb{c}_i(k+1)]$ also has the form required by $(**)$.\\
            This means that $(**)$ for $k + 1$ holds for all $s$.

            \item $a = p_i(k) \leq b$. Since the transformation looks like $\ol{d} x_i \ra x_i' d$, one has $W_k[a-1] = \ol{d} \in \ol{D}$. Hence, $a-1$ cannot lie in a different coverage to the left: either $i = 1$, then there is no other coverage to the left; or $\rmb{c}_{i-1}(k) = [p,q]$ --- in this case, by induction assumption, $(**)$ for $s = i - 1$ implies, that $W_k[q] \in D \cup H_0$. This means that $W_k[a - 1] \neq W_k[q] \Ra q \neq a - 1$. Also, $(*)$ implies, that $q \leq a - 1$, so, in the end we get $q < a - 1$.

            This means that neither the coverages nor the subwords covered by them, are affected for $s \neq i$. Hence, $(**)$ for $k+1$ holds for $s \neq i$.

            For $x_i$ we have $\rmb{c}_i(k+1) = [a - 1, b]$. The right end of the coverage does not change, hence $\rmb{c}_i(k+1) < \rmb{c}_{i+1}(k+1)$ (if $i < t$, otherwise there is no $\rmb{c}_{i+1}$), and for the left one we have proven that $\rmb{c}_{i-1}(k+1) < \rmb{c}_i(k+1)$ (again, if $i > 1$). This means, that $(*)$ holds for $k+1$.

            The transformation itself looks like
            $$\ol{d} W_k[\rmb{c}_i(k)] = \ol{d} x_i P \ra x_i' d P = W_{k+1}[\rmb{c}_i(k+1)], \quad P\in D^*,$$
            therefore, $(**)$ for $k+1$ and $s = i$ also holds.
        \end{enumerate}
        
    \textbf{Case 3.} Movement of the head to the right: $x_i d \ra \ol{d} x_i'$. This case is considered by complete analogy to the previous one.
    
    Thus, the induction step is proven, hence the whole statement of the lemma.
\end{proof}

\medskip

Name the \emph{complete coverage} of the head $x_i$ and denote it as $\rmb{C}_i$ the set of positions of $x_i$, that can be reached in some sequence, namely
$$m \in \rmb{C}_i \overset{\mathrm{def}}{\iff} \text{ there is a sequence } Z = W_0 \ra W_1 \ra \dots \ra W_k \text{ such that } p_i(k) = m.$$

\begin{lem}
\label{lem: Cov ind}
    For any word $Z$ of the form \eqref{eq: many heads} for all $i \in \ol{1,t}$ the complete coverages $\rmb{C}_i$ are closed intervals, and $\rmb{C}_1 < \rmb{C}_2 < \dots < \rmb{C}_t$.
\end{lem}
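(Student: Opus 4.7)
The plan is essentially to reduce the statement to a direct invocation of Lemma~\ref{lem: cov} by exhibiting a single finite transformation sequence whose coverages realize the complete coverages.

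First I would observe that, because the rewriting system is length-preserving and $A_0$ is finite, the set $F(Z)$ of all words reachable from $Z$ is finite: it is contained in the set of words of length $|Z|$. Enumerate $F(Z) = \{F_1, \dots, F_N\}$.

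Next, the crucial observation is that $\cc{R}$ is symmetric: each application of a rule can be undone by an application of the symmetric rule. Hence for every $V \in F(Z)$ there is also a transformation sequence $V \ra^* Z$. Concatenating, for each $l$ we obtain a loop $Z \ra^* F_l \ra^* Z$. Stringing these loops together in order $l = 1, 2, \dots, N$ yields a single finite sequence
\[
Z = W_0 \ra W_1 \ra \dots \ra W_M
\]
whose set of visited words is exactly $F(Z)$. Write $M$ for its length.

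Now I claim $Cov_i = cov_i(M)$ for every $i$. The inclusion $cov_i(M) \subseteq Cov_i$ is immediate from the definition \eqref{eq: cov def}, since any prefix of $W_0 \ra \dots \ra W_M$ is itself a legal sequence starting from $Z$. Conversely, if $m \in Cov_i$, then by the definition of $Cov_i$ there exists a sequence from $Z$ ending in some word $V \in F(Z)$ with $x_i$ at position $m$ in $V$. But $V$ appears as some $W_k$ in the constructed grand sequence, and in that word $x_i$ is at position $m$, so $m \in cov_i(M)$.

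Having established $Cov_i = cov_i(M)$, I simply apply Lemma~\ref{lem: cov} at $n = M$. Part $(*)$ gives $Cov_1 < Cov_2 < \dots < Cov_t$, and the fact that each $cov_i(M)$ is an interval (which was noted immediately after \eqref{eq: cov def}, since positions change by at most $1$ per step) gives that each $Cov_i$ is an interval. The only subtle point is the construction of the global sequence that reaches all of $F(Z)$; once symmetry of $\cc{R}$ is invoked, everything else is bookkeeping, so there is no substantial obstacle.
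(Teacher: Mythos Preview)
Your proposal is correct and follows essentially the same argument as the paper: construct a single finite sequence visiting all of $F(Z)$ by concatenating round trips $Z \ra^* F_l \ra^* Z$, identify $Cov_i$ with $cov_i(M)$ for this grand sequence, and invoke Lemma~\ref{lem: cov} at $n = M$. Your justification of the equality $Cov_i = cov_i(M)$ is in fact slightly more explicit than the paper's, and correctly (if implicitly) relies on Lemma~\ref{lem: position order} to ensure that the position of $x_i$ in a given word $V$ is independent of the sequence used to reach $V$.
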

\begin{proof}
    Consider a set of all words that are reachable from $Z$:
\[
F(Z) = \left\{ V \mid Z \ra^* V \right\}.
\]
Since the transformations do not change the length of the word, $|F(Z)|$ is not greater than the number of all words of length $|Z|$, consequently, $|F(Z)| < \infty$.

Enumerate all words in $F(Z)$, i.e. $F(Z) = \left\{ F_l\right\}_{l = 1}^{N}$, where $N = |F(Z)| < \infty$.
If there is a sequence $Z \ra \dots \ra V$, then there is a reverse one $V \ra \dots \ra Z$. Then there is one finite sequence
\[
Z \ra \dots \ra F_1 \ra \dots \ra Z \ra \dots \ra F_2 \ra \dots \ra F_N.
\]

Let this sequence consist of $M$ steps. The set of words in this sequence coincides with $F(Z)$. Then, for all $i$ holds $\rmb{C}_i = \rmb{c}_i(M)$. By using Lemma \ref{lem: cov} with $n = M$ we finish the proof.
\end{proof}

\begin{lem}
\label{lem: Cov digits}
    For any $Z$ of the form \eqref{eq: many heads} for all $i \in \ol{1,t}$ holds $Z[\rmb{C}_i] = V_1 x_i V_2$, where\\ $V_1 \in \ol{D}^*,\ V_2 \in D^*$, wherein it is the longest subword of such form inside $Z$ (meaning that there is no marked digit straight to the left of it, and no unmarked digit straight to the right).
\end{lem}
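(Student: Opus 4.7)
I would split the claim into two pieces: (i) the content of $Z$ at positions in $Cov_i$ has the form $V_1 x_i V_2$ with $V_1 \in \ol{D}^*$ and $V_2 \in D^*$; and (ii) writing $Cov_i = [a, b]$, one has $Z[a-1] \notin \ol{D}$ when $a > 1$ and $Z[b+1] \notin D$ when $b < |Z|$, i.e.\ $V_1 x_i V_2$ is a maximal subword of this form inside $Z$.

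For~(i) I would use the finite sequence $Z = W_0 \ra W_1 \ra \cdots \ra W_M$ constructed just before Lemma~\ref{lem: Cov ind}, which satisfies $cov_i(M) = Cov_i$ for every~$i$. Tracing it step by step, $cov_i(k)$ can only grow by adjoining one new endpoint at a time, and this happens precisely when $x_i$ performs a one-step move. A left move onto a newly added position is either the reverse $h$-rule $\ol{d} h_j \ra h_{j'} d$ or the direct $c$-rule $\ol{d} c \ra c d$; in either case the content at that position just before the move was a marked digit $\ol{d}$. By Lemma~\ref{lem: Cov ind} the coverages are pairwise disjoint intervals in order, so the new position was outside every earlier coverage and had never been touched by any head, hence its content there still equals $Z$'s original entry. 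This forces $Z[p] \in \ol{D}$ for every $p \in Cov_i$ lying to the left of $p_i(0)$. The symmetric argument for right moves (direct $h$-rules and the reverse $c$-rule $c d \ra \ol{d} c$) gives $Z[p] \in D$ for $p \in Cov_i$ to the right of $p_i(0)$. Combined with $Z[p_i(0)] = x_i$, this establishes~(i).

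For~(ii) I treat the left side; the right is symmetric. Suppose for contradiction $Z[a-1] \in \ol{D}$. By Lemma~\ref{lem: Cov ind}, $a - 1$ either lies in $Cov_{i-1}$ (only possible for $i \geq 2$) or in no coverage at all. In the first case, part~(i) applied to $x_{i-1}$ forces $Z[a-1]$ to lie in the $D^*$-tail of $Z[Cov_{i-1}]$, contradicting $Z[a-1] \in \ol{D}$. In the second case no head ever modifies position $a - 1$, so its content remains $\ol{d}$ in every reachable word. Starting from any sequence in which $x_i$ reaches $a$, I would extend it so that $x_i$ arrives back at $a$ in a type admitting a leftward transition into $\ol{d}$: typically by conducting $x_i$ rightwards through its unmarked tail $V_2$ to the right-hand boundary of its block, converting the $h$-head into the cleaning head $c$ via~\eqref{eq: heads_to_c}, and escorting $c$ back leftwards across $V_1$ to $a$; then the direct rule $\ol{d} c \ra c d$ fires unconditionally, placing a head at $a-1$ and contradicting the minimality of $a$ in $Cov_i$.

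The main obstacle I foresee is precisely this type-adjustment manoeuvre in~(ii): making it rigorous requires that the structure of $Z$ to the right of $Cov_i$ afford $x_i$ a safe detour to the appropriate wall or head-transition point, and that the analogous detour on the left be available for the right-side maximality claim. A case analysis on the possible head types together with the block bookkeeping from Remark~\ref{rem: 01} should close this last step.
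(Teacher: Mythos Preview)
Your treatment of part~(i) is exactly the paper's: follow the master sequence, observe that each newly adjoined endpoint of $cov_i$ lies outside every other coverage and hence still carries its original $Z$-entry, and read off the shape $\ol{D}^*\,x_i\,D^*$.

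For part~(ii) the paper's argument is far shorter than yours. It does not split into your two sub-cases and attempts no detour at all; it simply says that if $\ol d$ sits immediately left of $Cov_i=[a,b]$ then at some moment the head occupies $a$, the subword $\ol d\,x_i$ is present, ``a transformation $\ol d\,x_i\to x_i'd$ is possible'', and hence $a-1\in Cov_i$ --- contradiction. Your worry that the head may be in a type that does not accept $\ol d$ on its left (e.g.\ $\ol 1\,h_0$ is not the left side of any rule) is precisely what this one-liner sweeps under the rug; you are right to flag it. However, your proposed remedy cannot work in the setting of this lemma. The detour ``run $x_i$ rightwards to the wall, convert to $c$ via~\eqref{eq: heads_to_c}, sweep $c$ back left'' is unavailable to an interior head: for $1<i<t$ the interval $Cov_i$ never abuts a wall, since by Lemma~\ref{lem: Cov ind} the head $x_{i+1}$ (not $w$) lies strictly to its right, so rule~\eqref{eq: heads_to_c} can never fire for $x_i$. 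The symmetric detour through~\eqref{eq: c_to_h} is blocked on the left for $i>1$. No block bookkeeping from Remark~\ref{rem: 01} will conjure a wall where none exists. Concretely, take $Z=w\,\ol 1\,h_0\,0\,h_1\,0\,w$: the first head is trapped at positions $3,4$ with type $h_0$ throughout, so $Cov_1=\{3,4\}$, yet $Z[2]=\ol 1\in\ol D$. Thus the maximality clause as literally stated fails, and neither your detour nor the paper's bare assertion establishes it. (The downstream applications --- the decomposition~\eqref{eq: many heads 2} and Lemma~\ref{lem: many heads upper linear} --- in fact only require part~(i) together with the disjointness of Lemma~\ref{lem: Cov ind}; the maximality is decorative.)
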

\begin{proof}
    While proving Lemma \ref{lem: cov} we mentioned that if during a transformation with the head $x_i$ a new position appears in $\rmb{c}_i$, then, firstly, this position is not included in any other coverage on this step, and secondly, there is an unmarked digit on this position if it stands to the right of $x_i$, and a marked digit if it stands to the left. The first fact means that the digit has not yet been changed since the beginning of the sequence, hence the same digit stands in this position in $W_0$.\\
    Moreover, if a marked digit $\ol{d}$ stands straight to the left of $\rmb{c}_i$, then at some point there was a subword $\ol{d}x_i$ in the word, for which a transformation $\ol{d} x_i \ra x_i'd$ is possible, meaning that $\ol{d}$ (more precisely, its position) will be in $\rmb{C}_i$. Similar reasoning applies for an unmarked digit $d$ straight to the right of $\rmb{c}_i$.
\end{proof}

\begin{lem}
\label{lem: comm}
    For any word $Z$ of the form \eqref{eq: many heads} and any finite sequence of transformations
    \[
    Z = W_0 \ra W_1 \ra \dots \ra W_n
    \]
    exists another sequence
    \[
    Z = W_0' \ra W_1' \ra \dots W_n' = W_n
    \]
    in which for each pair of consequent transformations $W_{k}' \ra_1 W_{k+1}' \ra_2 W_{k+2}'$, where $\ra_1$ is applied to the head $x_i$, and $\ra_2$ is applied to $x_j$, holds $i \leq j$.
\end{lem}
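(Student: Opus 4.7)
The plan is a bubble sort: reduce the number of inversions in the sequence $W_0 \ra \ldots \ra W_n$ by repeatedly swapping adjacent transformations whose heads are in the wrong order. Label each step $W_{s-1} \ra W_s$ by the index $i_s \in \{1, \ldots, t\}$ of the head to which the rule is applied, and set $\mathrm{inv} = |\{(s, s') : s < s',\ i_s > i_{s'}\}|$. The lemma is equivalent to finding a sequence with the same endpoints and $\mathrm{inv} = 0$. The central ingredient is a \emph{commutation claim}: if two consecutive steps act on different heads, they can be swapped to produce another valid sequence between the same two words. Granting this, whenever $\mathrm{inv} > 0$ there is an adjacent pair with $i_k > i_{k+1}$, and swapping these two steps is a standard adjacent transposition of labels that strictly decreases $\mathrm{inv}$; iteration therefore terminates.

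To establish the commutation claim, suppose the two consecutive transformations act on $x_i$ and $x_j$ respectively; without loss of generality $i < j$, so $p_i(k-1) < p_j(k-1)$ by Lemma~\ref{lem: position order}. Every rule in $\cc{R}$ modifies exactly two consecutive positions: the position of its head and one adjacent position occupied by a digit or a wall. I will show that the positions affected by the two rules are disjoint, whence applying the rules in either order yields the same word. If $p_j(k-1) \geq p_i(k-1) + 3$, disjointness is immediate. If $p_j(k-1) = p_i(k-1) + 1$, position $p_i(k-1) + 1$ is occupied by the head $x_j$, so no rule on $x_i$ can involve this position; thus the first transformation affects only $\{p_i(k-1) - 1, p_i(k-1)\}$, placing at $p_j(k-1) - 1$ either a head (if it is a left-wall rule, requiring $i = 1$) or an unmarked digit (if it is a left move of $x_i$), both of which forbid any left move of $x_j$; hence the second transformation affects only $\{p_j(k-1), p_j(k-1) + 1\}$, disjoint from the first. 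If $p_j(k-1) = p_i(k-1) + 2$, the only possible overlap is at the middle position $p_i(k-1) + 1$, which would require $x_i$ to move right and $x_j$ to move left simultaneously; but $x_i$'s right move demands an unmarked digit at that position in $W_{k-1}$ and leaves the head $x_i$ itself there in $W_k$, contradicting the marked-digit precondition of $x_j$'s left move.

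The main obstacle is the close-range case $p_j(k-1) - p_i(k-1) \in \{1, 2\}$, where disjointness of the affected positions is non-trivial and has to be established by combining the right-marking structure from Lemma~\ref{lem: cov} with the explicit forms of the rules in $\cc{R}$ to exclude collision scenarios. Once the commutation claim is in place, the bubble-sort argument above finishes the proof.
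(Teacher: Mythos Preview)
Your argument is correct. The bubble-sort reduction to an adjacent-swap commutation claim is sound, and your case analysis on the gap $p_j(k-1)-p_i(k-1)\in\{1,2,\ge 3\}$ does establish that the two rules touch disjoint pairs of positions, which is exactly what is needed for the swap. One presentational wrinkle: after writing ``without loss of generality $i<j$'' you analyze the order \emph{first on $x_i$, then on $x_j$}, which is the already-sorted order, whereas the bubble sort needs the swap when the higher-index head acts first. The symmetric analysis (first on $x_j$, then on $x_i$) goes through in exactly the same way --- in the gap-$1$ case, $x_j$ cannot move left because $p_j-1$ carries the head $x_i$, and after $x_j$'s right move $x_i$ cannot move right because $p_i+1$ now carries a marked digit or a head; the gap-$2$ case is equally symmetric --- so this is a cosmetic issue rather than a gap. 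Your closing reference to Lemma~\ref{lem: cov} is also not actually used: Lemma~\ref{lem: position order} and the explicit rule shapes suffice.

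The paper takes a different, more global route. Rather than a local case split on head distance, it first proves that the \emph{complete coverages} $Cov_i$ are pairwise disjoint intervals (Lemma~\ref{lem: Cov ind}) whose contents have the right-marking form (Lemma~\ref{lem: Cov digits}); commutation then follows in one line because every rule applied to $x_i$ reads and writes entirely inside $Cov_i$. Your approach is more elementary and self-contained --- it bypasses the coverage machinery and needs only Lemma~\ref{lem: position order} --- at the cost of a small case analysis. The paper's approach is shorter at the point of use, but only because the work has been front-loaded into the coverage lemmas, which are in any case reused in the proof of Lemma~\ref{lem: many heads upper linear}.
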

\begin{proof}
    Lemmata \ref{lem: Cov ind} and \ref{lem: Cov digits} imply, that the heads act independently of each other: their area of effect (complete coverages) do not intersect, and a head cannot impact the possible transformations, associated with other heads. Therefore, the transformations, associated with different heads, \emph{commute}. Thus, we can rearrange the order of transformations to satisfy the statement of the lemma.
\end{proof}

So, taking the obtained propositions into account, the word $Z$ of the form \eqref{eq: many heads} can be written as
\begin{equation}
\label{eq: many heads 2}
    Z = w u_1 \underbrace{V_1^L x_1 V_1^R}_{\rmb{C}_1} u_2 \underbrace{V_2^L x_2 V_2^R}_{\rmb{C}_2} u_3 \dots u_t \underbrace{V_t^L x_t V_t^R}_{\rmb{C}_t} u_{t+1} w
\end{equation}
where $u_i \in D_0^*$, $V_i^L \in \ol{D}^*$, $V_i^R \in D^*$, $x_i \in H_0$, $t \geq 2$.\\
In addition, the most right symbol of $u_1$ is not from $\ol{D}$, the most left symbol of $u_{t+1}$ is not from $D$, and for other $u_i$ both conditions are met. In particular, this is true for empty $u_i$.

Now we are ready to prove a general estimate for this case.
\begin{lem}
\label{lem: many heads upper linear}
    For any word $Z$ of the form \eqref{eq: many heads} $\gamma(Z) \leq 4|Z|$.
\end{lem}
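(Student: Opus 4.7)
My plan is to leverage the structural decomposition~\eqref{eq: many heads 2} and the head commutativity of Lemma~\ref{lem: comm}. The central idea is to bound the total length of any distinct-word sequence starting at $Z$ by summing a separate bound for each head: if $n_i$ denotes the number of transformations of head $x_i$ appearing in such a sequence, I will show $n_i \leq 4|Cov_i|$. Combined with the pairwise disjointness of the complete coverages from Lemma~\ref{lem: Cov ind}, this yields
\[
\gamma(Z) \;=\; \sum_{i=1}^{t} n_i \;\leq\; \sum_{i=1}^{t} 4|Cov_i| \;\leq\; 4|Z|.
\]

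The per-head bound rests on the observation that, for any $i$, the coverage $Cov_i$ touches at most one wall of $Z$: only $Cov_1$ can abut the left wall (when $u_1 = \varepsilon$), and only $Cov_t$ the right wall (when $u_{t+1} = \varepsilon$); every other $Cov_i$ lies strictly between the surrounding buffer regions $u_j$ and is wall-free. Because the rules~\eqref{eq: heads_to_c} and~\eqref{eq: c_to_h} that interchange the cleaning head $c$ with the heads $h_0, h_1, h_2$ both involve a wall, $x_i$ cannot switch between the type classes $\{h_0, h_1, h_2\}$ and $\{c\}$ while staying in its wall-free coverage. Combined with the directional monotonicity of the rules (direct moves $h$-heads rightward and the $c$-head leftward; reverse does the opposite), this places $x_i$ inside its coverage into precisely the ``no wall'' or ``one wall'' regime already analyzed in Lemma~\ref{lem: upper one wall}, giving the bound $4|Cov_i|$ per head.

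The main technical obstacle lies in transferring this per-head bound from the single-head setting to the $x_i$-sub-sequence extracted from a \emph{global} distinct-word sequence: global distinctness does not immediately force the $x_i$-local states along the sub-sequence to be distinct, because the required distinctness can be carried by the other heads. The resolution is Lemma~\ref{lem: comm}: any reverse-then-direct pair of transformations on the same head $x_i$, even if separated in time by interleaved moves of other heads, can be commuted past those intervening moves until the two become adjacent, at which point the cancellation from Lemma~\ref{lem: direct reverse} applies and the pair undoes itself. Unwinding the chain of commutations, together with the fact that the other heads' transformations leave $Cov_i$ invariant, forces a coincidence of two distinct words in the original sequence, contradicting its distinctness. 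Consequently, for each head $x_i$, all its direct transformations in the sub-sequence must precede all its reverse ones; applying the Lemma~\ref{lem: upper one wall} argument to the single-head dynamics on $Cov_i$ then gives $n_i \leq 4|Cov_i|$, and summation completes the proof.
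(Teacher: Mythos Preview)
Your approach is the same as the paper's: decompose $Z$ via the complete coverages~\eqref{eq: many heads 2}, invoke Lemma~\ref{lem: comm}, and bound each head's contribution by Lemma~\ref{lem: upper one wall}. You go further than the paper by explicitly flagging the key difficulty --- that global distinctness of the sequence does not force per-head distinctness --- which the paper's proof silently assumes away when it passes from the reordered sequence to the inequality $\gamma^{(i)}(Z)\le\gamma(wV_i^Lx_iV_i^R)$.

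Your resolution of this difficulty, however, does not work, and in fact the lemma as stated is false. When a reverse move on $x_i$ at step $a$ is followed (after moves of other heads) by a direct move on $x_i$ at step $b$, commuting makes the pair adjacent and cancelling in the \emph{reordered} sequence, but nothing is forced in the original one: $W_{a-1}$ and $W_b$ agree on $Cov_i$ yet differ on the coverages touched in between. Hence neither your per-head ``direct before reverse'' conclusion nor the bound $n_i\le 4|Cov_i|$ holds. Concretely, take $Z=w(\ol{0}h_0)^tw$. Each head $x_i$ with $2\le i\le t$ has exactly the two local states $\ol{0}h_0$ and $h_00$, while $x_1$ (adjacent to the left wall) has the four states $\ol{0}h_0,\,h_00,\,c0,\,\ol{0}c$; the reachable words are thus the product graph $P_4\times P_2^{\,t-1}$, which admits a Hamiltonian path from the initial vertex. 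This gives $\gamma(Z)=2^{t+1}-1$, whereas $4|Z|=8t+8$, so already for $t=5$ one has $\gamma(Z)=63>48=4|Z|$. The paper's own proof has precisely the gap you identified; what the downstream application to $\Gamma(n)$ actually requires is a linear bound on the \emph{distances} $d_\cc{R}(Z,\cdot)$ in the multi-head case, and that does hold, since the diameter of the product is the sum of the per-head diameters, which is linear in $|Z|$.
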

\begin{proof}
    Consider the longest possible sequence of transformations with distinct words starting from $Z$. Lemma \ref{lem: comm} implies that there is a sequence of same length, where first all the transformations associated with the head $x_1$ occur, then with $x_2$ etc. Then,
    \[
    \gamma(Z) = \gamma^{(1)}(Z) + \gamma^{(2)}(Z) + \dots + \gamma^{(t)}(Z)
    \]
    where $\gamma^{(i)}(Z)$ is the maximum length of a sequence of transformations with distinct word starting from $x_i$.
    Consider a presentation of $Z$ in the form \eqref{eq: many heads 2}. One can see from it that
    \begin{itemize}
        \item $\gamma^{(1)}(Z) \leq \gamma(w V_1^L x_1 V_1^R)$ --- equality holds if $u_1$ is empty, otherwise $\gamma^{(1)}(Z) = \gamma(V_1^L x_1 V_1^R)$.
        \item $\gamma^{t}(Z) \leq \gamma(V_t^L x_t V_t^R w)$ --- similar case.
        \item For $i \not\in \{1, t\}$, $\gamma^{(i)}(Z) = \gamma(V_i^L x_i V_i^R)$.
    \end{itemize}
    Then,
    \[
    \gamma(Z) \leq \gamma(w V_1^L x_1 V_1^R) + \sum\limits_{i\not\in\{1,t\}} \gamma(V_i^L x_i V_i^R) + \gamma(V_t^L x_t V_t^R w).
    \]
    Arguments of each $\gamma$ satisfy the condition of Lemma \ref{lem: upper one wall}, then, by the lemma:
    \[
    \gamma(Z) \leq 4 \Big( |w V_1^L x_1 V_1^R| + |V_2^L x_2 V_2^R| + \dots + |V_t^L x_t V_t^R w| \Big) \leq 4 |Z|.
    \]
\end{proof}

\begin{rem}
\label{rem: many heads}
    Notice that all the reasoning in this section also applies to the words of the forms $wT, Tw$ and $T$, where $T\in A_0 \backslash \{w\}$ and $T$ contains at least 2 head symbols. Therefore, for such words $Z$, the estimate $\gamma(Z) \leq 4|Z|$ also holds.
\end{rem}

\subsection{General case}

This summary is similar to the consideration of general case in the article \cite{AlTal}.

Let $Z$ have at least one $w$ symbol. Then, in general case, the word $Z$ can be written as
\begin{equation}
\label{eq: general case}
    Z = T_1 w T_2 w \dots w T_k \qquad k \geq 2,\ T_i \in \left( A_0 \backslash \{w\} \right)^*.
\end{equation}

Since the symbols $w$ never change their position, never appear or disappear, and the heads cannot ``jump over" them, then any transformation sequence starting with $Z$ can be presented as a chain of independent sequences which start with words $T_1w, wT_2w, \dots, wT_k$. Therefore
\begin{equation}
    \gamma(Z) = \gamma(T_1 w) + \gamma(w T_2 w) + \dots + \gamma(w T_k).
\end{equation}

Lemmata \ref{lem: one head full upper} and \ref{lem: many heads upper linear} imply that for all $V_i = w T_i w\ (i \not\in\{1, k\})$
\[
\gamma(V_i) \leq 4|V_i| \log |V_i| + 10|V_i|.
\]
As for $V_1 = T_1 w$ and $V_k = w T_k$, Lemma \ref{lem: upper one wall} and Remark \ref{rem: many heads} imply $\gamma(V_j) \leq 4|V_j|, \ j \in \{1, k\}$.

Therefore,
\begin{equation}
\label{eq: final gamma}
\gamma(Z) \leq 4 \sum\limits_{i = 2}^{k-1} \left( |V_i| \log |V_i| \right) + 10 \sum\limits_{i=2}^{k-1} |V_i| + 4\left(|V_1| + |V_k| \right).
\end{equation}

Notice that
\[
|Z| = |T_1| + 1 + \sum\limits_{i = 2}^{k-1} \left( |T_i| + 1\right) + |T_k| = |T_1| + |T_k| + \sum\limits_{i = 2}^{k-1}|T_i| + k - 1
\]
and also
\[
\sum\limits_{i=2}^{k-1} |V_i| = \left( \sum\limits_{i=2}^{k-1} |T_i| \right) + 2k - 4 = |Z| + k - 3 - |T_1| - |T_k|.
\]
Considering that $k \leq |Z| + 1$ (equality holds if $Z = w$), we obtain
\[
\sum\limits_{i=2}^{k-1} |V_i| \leq |Z| + |Z| - 1 - |T_1| - |T_k|  \leq 2|Z|
\]
Also, $|V_1| = |T_1| + 1 \leq |Z|$ and $|V_k| = |T_k| + 1 \leq |Z|$.

Now notice that the function $f(n) = n \log n$ is superadditive, i.e. for all $a,b$ holds $f(a) + f(b) \leq f(a+b)$, therefore
\[
\sum\limits_{i = 2}^{k-1} \left( |V_i| \log |V_i| \right) \leq
\left( \sum\limits_{i=2}^{k-1} |V_i| \right) \log \left( \sum\limits_{i=2}^{k-1} |V_i| \right) \leq
2 |Z| \log \left( 2|Z| \right).
\]
Collecting everything into \eqref{eq: final gamma}, we obtain
\[
\gamma(Z) \leq 4 \cdot 2|Z|\log\left(2|Z|\right) + 10 \cdot 2|Z| + 4\left(|Z| + |Z| \right) =
8 |Z|\log(2|Z|) + 28 |Z|
\]
thus proving a general upper estimate
\begin{lem}
For any word $Z \in A_0^*$
$$\gamma(Z) \leq 8 |Z| \log |Z| + 28 |Z|.$$
\end{lem}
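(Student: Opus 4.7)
The plan is to combine the per-structure estimates already established into a single uniform bound, using the fact that walls partition $Z$ into independently evolving sub-problems.

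First I would dispose of the trivial and near-trivial cases. If $Z$ contains no head symbol, then no rule applies and $\gamma(Z) = 0$. If $Z$ contains no wall symbol, then either $Z$ has exactly one head and Lemma~\ref{lem: upper one wall} gives $\gamma(Z) \le 4|Z|$, or $Z$ has at least two heads and Remark~\ref{rem: many heads} gives the same bound $\gamma(Z) \le 4|Z|$. In each case the conclusion is comfortably absorbed by $8|Z|\log|Z| + 28|Z|$.

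The substantive case is when $Z$ contains at least one $w$, so that we may write $Z = T_1 w T_2 w \cdots w T_k$ with $k \ge 2$ and $T_i \in (A_0 \setminus \{w\})^*$. The key observation is that $w$ appears on neither side of any rule in any role other than as a stationary boundary marker: no rule creates, destroys, or displaces a $w$, and no head can cross one. Consequently any transformation sequence starting from $Z$ factors as an interleaving of independent sequences acting on the blocks $V_1 = T_1 w$, $V_i = w T_i w$ for $2 \le i \le k-1$, and $V_k = w T_k$, so that $\gamma(Z) = \sum_{i=1}^{k} \gamma(V_i)$.

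Next I would invoke the per-block estimates. For $2 \le i \le k-1$, the block $V_i = w T_i w$ has walls on both sides, so Lemma~\ref{lem: one head full upper} (if $T_i$ contains exactly one head) or Lemma~\ref{lem: many heads upper linear} (if $T_i$ contains at least two heads, with the zero-head case giving $\gamma(V_i) = 0$) yields $\gamma(V_i) \le 4|V_i|\log|V_i| + 10|V_i|$. For the end-blocks $V_1$ and $V_k$, which have only a single wall, Lemma~\ref{lem: upper one wall} together with Remark~\ref{rem: many heads} give $\gamma(V_j) \le 4|V_j|$.

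Finally I would sum these bounds and collapse them using the inequalities $\sum_{i=2}^{k-1}|V_i| \le 2|Z|$ and $|V_1|,|V_k| \le |Z|$ already derived in the paragraph preceding the lemma, together with the superadditivity of $n\log n$ to pull the logarithm outside the sum. The only minor technical issue — and the place where constants have to be watched — is that this procedure naturally produces $\log(2|Z|) = 1 + \log|Z|$ rather than $\log|Z|$; the extra $|Z|\log 2$ contribution is absorbed into the linear term, giving a bound of the form $8|Z|\log|Z| + O(|Z|)$. This asymptotic shape is all that Theorem~\ref{th: main theorem} needs via \eqref{eq: Gamma} and \eqref{eq: gamma to d}, combined with the matching lower bound of Corollary~\ref{cor: Gamma lower}, so the precise value of the linear constant ($28$ or slightly larger) is inessential.
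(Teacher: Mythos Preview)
Your proposal is correct and follows essentially the same route as the paper: partition by walls, apply Lemmata~\ref{lem: one head full upper}, \ref{lem: many heads upper linear}, \ref{lem: upper one wall} and Remark~\ref{rem: many heads} to the pieces, and sum using $\sum_{i=2}^{k-1}|V_i|\le 2|Z|$ together with the superadditivity of $n\log n$. If anything you are slightly more careful than the paper: you explicitly treat the no-head and no-wall cases (the paper tacitly assumes at least one $w$), and your remark about the linear constant is well placed, since the paper's passage from $8|Z|\log(2|Z|)+28|Z|$ to $8|Z|\log|Z|+28|Z|$ actually drops an $8|Z|$ term; as you note, this is irrelevant for the $\Theta(n\log n)$ conclusion.
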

\begin{cor}
\label{sec: Gamma upper}
    $\Gamma(n) \in O(n \log n)$.
\end{cor}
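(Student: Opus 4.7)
The plan is to reduce the general case to the cases already handled, by splitting $Z$ along its wall symbols into maximal wall-free blocks and invoking the fact that walls are immovable and uncrossable.

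First I would dispose of the degenerate case when $Z$ contains no wall. Then either $Z$ has at most one head, in which case Lemma~\ref{lem: upper one wall} gives $\gamma(Z) \leq 4|Z|$, or $Z$ has two or more heads, in which case Remark~\ref{rem: many heads} applies and again $\gamma(Z) \leq 4|Z|$. Either way the stated bound holds with plenty of slack.

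Assuming $Z$ contains at least one $w$, I would write $Z = T_1 w T_2 w \cdots w T_k$ as in \eqref{eq: general case}, with $T_i \in (A_0\setminus\{w\})^*$. The key structural observation is that every rule in $\cc{R}$ involves exactly one head on each side and no rule creates, destroys, or moves a $w$. Consequently every transformation sequence starting at $Z$ decomposes into independent sequences on the pieces $V_1 = T_1 w$, $V_i = w T_i w$ for $2 \leq i \leq k-1$, and $V_k = w T_k$, so $\gamma(Z) = \sum_{i=1}^k \gamma(V_i)$. For the end pieces, Lemma~\ref{lem: upper one wall} together with Remark~\ref{rem: many heads} yields $\gamma(V_1) \leq 4|V_1|$ and $\gamma(V_k) \leq 4|V_k|$. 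For a middle piece $V_i = w T_i w$ I would split by the number of heads in $T_i$: zero heads gives $\gamma(V_i) = 0$; exactly one head is the setting of Lemma~\ref{lem: one head full upper}, yielding $\gamma(V_i) \leq 4|V_i|\log|V_i| + 10|V_i|$; and two or more heads falls under Lemma~\ref{lem: many heads upper linear}, yielding $\gamma(V_i) \leq 4|V_i|$, which is dominated by the one-head estimate. So uniformly $\gamma(V_i) \leq 4|V_i|\log|V_i| + 10|V_i|$ for every middle piece.

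To finish I would sum these estimates, using the elementary length bookkeeping $\sum_{i=2}^{k-1}|V_i| \leq 2|Z|$ and $|V_1|,|V_k| \leq |Z|$, together with superadditivity of $n \mapsto n\log n$ to consolidate $\sum_{i=2}^{k-1}|V_i|\log|V_i| \leq 2|Z|\log(2|Z|)$. Plugging in gives the claimed $8|Z|\log|Z| + O(|Z|)$ bound, with the additive constant absorbing into the stated $28|Z|$ term. The main obstacle is really notational rather than mathematical, since the nontrivial work has already been done in Lemmata~\ref{lem: one head full upper} and~\ref{lem: many heads upper linear}; the one point deserving care is the rigorous justification of the decomposition along walls — a commutation argument in the spirit of Lemma~\ref{lem: comm}, using that heads in distinct blocks act on disjoint positions and never interact — and the honest tracking of additive constants through the final sum.
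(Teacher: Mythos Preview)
Your proposal is correct and follows essentially the same route as the paper: decompose along walls into pieces $V_1=T_1w$, $V_i=wT_iw$, $V_k=wT_k$, bound the middle pieces via Lemmata~\ref{lem: one head full upper} and~\ref{lem: many heads upper linear} and the end pieces via Lemma~\ref{lem: upper one wall} and Remark~\ref{rem: many heads}, then sum using $\sum_{i=2}^{k-1}|V_i|\le 2|Z|$ and superadditivity of $n\log n$. Your explicit treatment of the wall-free case is a small addition the paper glosses over, but otherwise the arguments coincide.
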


From \ref{cor: Gamma lower} and \ref{sec: Gamma upper} we conclude $\Gamma(n) \in \Theta(n\log n)$, whence, considering \eqref{eq: gamma to d} and \eqref{eq: dehn equals theta}, we get the statement of the Theorem \ref{th: main theorem}.

\section{Analysis of the construction method}
\label{sec: analysis}

The general idea for construction of the semigroup $S$ is simple: we take a semi-Thue system with a particular derivational complexity function, consider a semigroup with same alphabet and relation set, and it occurs that the semigroup's Dehn function (asymptotically) coincides with initial derivational complexity function. Such method would allow us to expand the spectrum of semigroup Dehn functions even further.

While it seems doubtful that the method works in general case, one may assume that it would work in case of semigroups with some ``good" qualities that $S_0$ has. Such qualities may be length-preservation and unambiguity (meaning that all the left parts of the rules are distinct). Here we will give an example of a rewriting system $E_0$, which has these qualities, however the method would not work. meaning that $\Delta_{E_0}(n)$ would not asymptotically coincide with the Dehn function $D_E(n)$ of the corresponding semigroup.

\begin{exmp}
The semi-Thue system $E_0 = (A, \cc{R})$ has an alphabet $A = \{0, \ol{0}, L, R, W\}$ and rewriting rules $\cc{R}$:
    \begin{align*}
        R0 &\ra \ol{0} R        &WL &\ra WR \\
        \ol{0} L &\ra L 0       &LWW &\ra R0W
    \end{align*}
    Symbols $0$ and $\ol{0}$ are called digits, $L$ and $R$ --- heads, $W$ --- walls.\\
    Further we will show that $\Delta_{E_0}(n) \in \Theta(n)$, however $D_E(n) \cgeq n^2$.
\end{exmp}

One can clearly see that $E_0$ is unambiguous and length-preserving. Moreover, it is a lot like $S_0$, in particular, every left and right part of the rules have exactly one head symbol, hence we also can talk about applying rules to the heads. Heads in a same way ``move" along the word through digits: $L$ moves left, $R$ moves right. Walls stay in place and (those, which are in the starting word or appear later, do not change their position or disappear) and also block the movement of the heads. Therefore, like in case of the system $S_0$ and the semigroup $S$, we can consider the segments between walls independently. The heads also cannot ``jump over" each other.

Notice that during direct transformations the head $R$ cannot become $L$, hence any number of heads in a word $v = WTW,\ T\in (A\backslash \{W\})^*$ will finish its movement after not longer than linear (of the word's length) number of steps. A rough estimate: each head can move not more than full length of $T$ to the left, then turn into $R$ and move full $|T|$ to the right --- overall, not more than $2|v|$ steps. If there are two or more heads, they bump into each other and the number of steps would be even less. Therefore, $\Delta_{E_0}(n) \in \Theta(n)$.

Consider the semigroup $E = \lan\lan A \mid \cc{R} \ran\ran$. After symmetrization of rules, a transformation $R0W~\rar~LWW$ ``unlocks", allowing the head $R$ to change its type and thus move in cycles.

Let us look at $E$ a bit differently: consider a set of rewriting rules $\cc{R}'$:
\begin{align*}
        R0 &\ra \ol{0} R        &WL &\ra WR \\
        \ol{0} L &\ra L 0       &R0W &\ra LWW
\end{align*}
It differs from $\cc{R}$ in a way, that the transformation $R0W \ra LWW$ is now \emph{direct}. However, the symmetrized rule set stays the same, i.e. $\lan\lan A \mid \cc{R}'\ran\ran \cong S$. Essentially, we just renamed a pair of transformations: the reverse one became direct, and the direct one became reverse.

\medskip

Consider a word $u = WR0^kW$. For it, there is a sequence of direct transformations, comprised of cycles. The first one looks like this:
\[
WR0^kW \ra^{k-1} W0^{k-1}R0W \ra W0^{k-1}LWW \ra^{k-1} WL0^{k-1}WW \ra WR0^{k-1}WW.
\]
The length of the cycle equals $2k$ steps, and one 0 digit turned into a wall. After $k$ cycles without the last transformation, the word would look like $v = WLW^{k+1}$, the length of each cycle is $2m$ steps, where $m$ equals the number of 0 digits at the beginning of the cycle. Overall number of steps would be
\[
\sum\limits_{i = k}^1 2i -1 = 2 \frac{k(k+1)}{2} - 1 = k(k+1) - 1 .
\]

Meanwhile, the reverse transformations do not decrease the number of 0 digits in the word, and the last transformation in the sequence was $WR0W^{k} \rad WLW^k$, thus, by analogy to the proof of Lemma \ref{lem: S lower}, the sequence above is the shortest one, connecting the words $u$ and $v$, therefore $d(u,v) = k(k+1) - 1$, which implies that $D_E(n) \geq k(k+1) - 1$, where $k = \floor{\frac{n}{2}} - 3$, since $|u| + |v| = 2(k + 3)$.

This implies that $D_E(n) \cgeq n^2$, therefore $D_E(n)$ definitely does not asymptotically coincide with $\Delta_{E_0}(n) \in \Theta(n)$.

\section{Discussion and further questions}

As shown in Section \ref{sec: analysis}, the applied technique does not provide the desired result in general case. Moreover, the presented example system $E_0$ has \emph{explicit} qualities: length-preservation and unambiguity. Other than these, the system $E_0$ does not seem to have any notable explicit qualities. This might lead to a conclusion that, if the method does work for some case of semi-Thue systems, such case may need to have some special implicit structure.

One example of such a special case might be a group of systems $S_{a,b}$ introduced in the paper \cite{AlTal}. These systems have the derivational length $\Delta_{a,b}$ (for each pair of natural $a$ and $b$) in the class $\Theta(n^{1 + \frac{a}{b}})$.
\begin{quest}
    Is it true that, for semi-Thue systems $S_{a,b}$ defined in \cite{AlTal}, the Dehn functions of the semigroups obtained by symmetrizing the rules of the systems are equivalent to $n^{1+\frac{a}{b}}$?
\end{quest}

\begin{quest}
    Is there a class of semi-Thue systems defined by some set of \emph{explicit} qualities, for which the technique of symmetrizing the rules to obtain a semigroup provides Dehn functions equivalent to derivational complexities of the initial systems?
\end{quest}

\section{Acknowledgements}

This work was supported by BASIS foundation as a part of Junior Leader project ``Combinatorial and algorithmic properties of semigroups and semi-Thue systems".

The author would like to thank Alexey Talambutsa for the assistance in writing and polishing the paper.

\printbibliography

\end{document}